\def\tsc#1{\csdef{#1}{\textsc{\lowercase{#1}}\xspace}}
\DeclareMathAlphabet\mathbfcal{OMS}{cmsy}{b}{n}
\def\Emb{\mathop{\mathrm{Emb}}\nolimits}
\def\K{{\mathcal K}}
\def\str#1{\mathrm {#1}}
\def\rel#1#2{R_{\mathbf{#1}}^{#2}}
\def\Alphabet{\Sigma}
\def\VSpace#1#2#3{\left[#1\right]^*\!{#2\choose #3}}
\def\Space#1#2#3{\left[#1\right]{#2\choose #3}}
\newtheorem{theorem}{Theorem}
\newtheorem{lemma}[theorem]{Lemma}
\newtheorem{prop}[theorem]{Proposition}
\newtheorem{corollary}[theorem]{Corollary}
\newtheorem{observation}[theorem]{Observation}
\newtheorem{claim}[theorem]{Claim}
\newdefinition{remark}{Remark}
\newdefinition{example}{Example}
\newdefinition{definition}{Definition}
\newproof{proof}{Proof}
\def\str#1{\mathbf {#1}}
\def\Fraisse{Fra\"{\i}ss\' e}
\begin{document}
\let\WriteBookmarks\relax
\def\floatpagepagefraction{1}
\def\textpagefraction{.001}



\title {Big Ramsey degrees using parameter spaces}

\author[1]{Jan Hubička}



\ead{hubicka@kam.mff.cuni.cz}

\ead[url]{http:/www.ucw.cz/~hubicka}


\affiliation[1]{organization={Department of Applied Mathematics, Charles University},
            addressline={Malostranské náměstí 25}, 
            city={Praha 1},
            postcode={118 00}, 
            country={Czech Republic}}
\cortext[1]{Supported  by  project 21--10775S  of  the  Czech  Science Foundation (GA\v CR) and in later stages by ERC Synergy grant DYNASNET 810115}


\begin{abstract}
We show that the universal homogeneous partial order has finite big Ramsey degrees and discuss several corollaries. Our proof relies on parameter spaces and the Carlson--Simp\-son theorem rather than on (a strengthening of) the Halpern--L\"auchli theorem and the Milliken tree theorem, which are typically used to bound big Ramsey degrees in the existing literature (originating from the work of Laver and Milliken).

This new technique has many additional applications. 
We show that the homogeneous universal triangle-free graph has finite big Ramsey degrees, providing a short proof of a recent result by Dobrinen.
Moreover,
generalizing an indivisibility (vertex partition) result of Nguyen van Th\'e and Sauer, we give an upper bound on big Ramsey degrees of metric spaces with finitely
many distances.
This leads to a new combinatorial argument for the oscillation stability of the Urysohn Sphere.
\end{abstract}
\maketitle
\section{Introduction}
We consider graphs, partial orders, (vertex)-ordered graphs, and partial orders with linear extensions as special cases of model-theoretic relational
structures (defined in Section~\ref{sec:preliminaries}).  Given structures $\str{A}$ and
$\str{B}$, we denote by $\Emb(\str{A}, \str{B})$ the set of all embeddings from
$\str{A}$ to $\str{B}$. We write $\str{C}\longrightarrow
(\str{B})^\str{A}_{r,l}$ to denote the following statement:
\begin{quote}
 For every colouring
$\chi$ of $\Emb(\str{A},\str{C})$ with $r$ colours, there exists an embedding
$f\colon \str{B}\to\str{C}$ such that $\chi$ does not attain more than $l$ values on
$\Emb(\str{A},f(\str{B}))$.
\end{quote}
  For a
countably infinite structure~$\str{B}$ and its finite sub\-struc\-ture~$\str{A}$,
the \emph{big Ramsey degree} of $\str{A}$ in $\str{B}$ is the least number
$L\in \mathbb \omega\cup \{\omega\}$ such that $\str{B}\longrightarrow
(\str{B})^\str{A}_{r,L}$ for every $r\in \mathbb \omega$; see~\cite{Kechris2005}.  
A countably
infinite structure $\str{B}$ has \emph{finite big Ramsey degrees} if the big
Ramsey degree of $\str{A}$ in $\str{B}$ is finite for every finite substructure $\str{A}$ of $\str{B}$.

A countable structure $\str{A}$ is called \textit{(ultra)homogeneous} if every
isomorphism between finite substructures extends to an automorphism of
$\str{A}$.  It is well known that there is, up to isomorphism, a unique
homogeneous partial order $\str{P}$ with the property that every countable
partial order has an embedding to $\str{P}$. We
call $\str{P}$ the \emph{universal homogeneous partial order}.  Similarly, there
is an up to isomorphism unique homogeneous triangle-free graph $\str{H}$ (called the \emph{universal homogeneous triangle-free graph}, sometimes also \emph{triangle-free Henson graph}) such that
every countable triangle-free graph embeds to $\str{H}$.
(See e.g.~\cite{Macpherson2011} for more background on homogeneous structures.)

Our main result is the following.
\begin{theorem}
\label{thm:posets}
The universal homogeneous partial order has finite big Ramsey degrees.
\end{theorem}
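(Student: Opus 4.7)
The plan is to code the universal homogeneous partial order $\str{P}$ as a suitable set of finite words over a fixed finite alphabet $\Sigma$ and then to apply the Carlson--Simpson theorem to colourings of tuples of such words. First I would fix an enumeration $v_0, v_1, \ldots$ of $\str{P}$ and encode $v_n$ by a word $c(v_n) \in \Sigma^{n}$ whose $i$-th letter records the relation (less than, greater than, or incomparable) between $v_i$ and $v_n$, with possibly a few additional bookkeeping symbols. The resulting set of codewords is constrained by transitivity; one verifies by a standard back-and-forth argument that this coding can always be arranged so that the image is isomorphic to $\str{P}$.

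Given a finite subposet $\str{A}$, an embedding $e\colon \str{A}\to \str{P}$ gives rise to a tuple $(c(e(a)))_{a\in A}$ whose combinatorial \emph{type} is determined by the relative order of the word-lengths, the positions at which the codewords first disagree, the specific letters at these ``split positions'', and the identification of each $a\in A$ with the corresponding codeword. Since $|A|$ and $|\Sigma|$ are finite, there are only finitely many such types, so it suffices to bound, for each type $\tau$ separately, the number of colours attained by any colouring on embeddings of type $\tau$. I would then view the set of embeddings of type $\tau$ as the substitution instances of a single parameter-word template, and apply the Carlson--Simpson theorem, which asserts that any finite colouring of finite words over $\Sigma$ has a monochromatic $\omega$-parameter subspace. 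A straightforward iteration over the finitely many types produces a single $\omega$-parameter word $W$ that is monochromatic on every type at once.

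The main obstacle, I expect, is to ensure that the substitution instances of $W$ still contain a copy of $\str{P}$. For unconstrained structures such as the Rado graph this is essentially automatic, but the transitivity axiom imposes nontrivial compatibility conditions between codewords at different levels: not every word over $\Sigma$ corresponds to a legal poset element, and transitivity must be preserved under the substitution operation. The coding and alphabet must therefore be chosen so that the extension property used in the back-and-forth survives the restriction to a Carlson--Simpson subspace; this will typically require passing to an appropriate ``envelope'' inside the subspace and choosing the alphabet carefully so that the poset relations induced on its words are self-consistent. Once this robustness is established, the argument gives an upper bound for the big Ramsey degree of $\str{A}$ equal to the number of its similarity types, which is finite.
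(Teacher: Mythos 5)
Your high-level strategy is the same as the paper's (code poset elements as finite words over a finite alphabet, iterate an infinite Graham--Rothschild/Carlson--Simpson theorem over the finitely many types), but the proposal leaves unresolved exactly the step that constitutes the actual content of the proof, and the way you propose to handle it would fail. You code $v_n$ by a word whose $i$-th letter records the relation of $v_i$ to $v_n$ and accept that ``not every word over $\Sigma$ corresponds to a legal poset element,'' hoping to recover legality inside a Carlson--Simpson subspace by ``passing to an appropriate envelope and choosing the alphabet carefully.'' This does not work as stated: the set of transitivity-legal codewords is not closed under substitution into an arbitrary infinite-parameter word, and a monochromatic subspace produced by the Carlson--Simpson theorem gives you no control over which of its words are legal, so you cannot run the back-and-forth inside it. The missing idea in the paper is to \emph{not} restrict to legal words at all: one defines a relation on \emph{all} of $\Alphabet^*$, $\Alphabet=\{L,X,R\}$, by declaring $w\prec w'$ iff there is a position $i$ with $(w_i,w'_i)=(L,R)$ and $w_j\leq_{\mathrm{lex}}w'_j$ for all $j<i$ --- i.e.\ any information at a coordinate that conflicts with earlier coordinates is simply discarded. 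One then proves three things: this $\preceq$ is a partial order on the whole word space (Proposition~\ref{prop:pos}); it is preserved \emph{and reflected} by substitution into any infinite-parameter word (Lemma~\ref{lem:preserve1}), so every Carlson--Simpson subspace automatically carries a copy of the same structure $\str{O}$; and $\str{P}$ still embeds into $\str{O}$, via a coding in which vertex $j$ occupies a \emph{pair} of coordinates and receives $(L,R)$ at its own pair (Proposition~\ref{prop:universalposet}). None of these appears, even in outline, in your proposal; in particular your single-letter-per-level coding does not by itself witness comparability in a way that survives the ``discard conflicts'' rule, which is why the paper doubles the coordinates.

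A secondary, smaller gap: your notion of type (relative lengths, first-disagreement positions, letters there) is the Milliken-style similarity type, which is not the invariant that parameter-word substitution preserves. The paper instead uses embedding types in \emph{minimal envelopes} of parameter words, and finiteness of the number of types is not automatic --- it rests on Proposition~\ref{prop:subspace}, which bounds the number of parameters of a minimal envelope of an $\ell$-element set by a function of $\ell$ and $|\Alphabet|$ (this is where parameter-word subspaces genuinely differ from the variable-word subspaces of Furstenberg--Katznelson, for which no such bound holds). With the order defined on all of $\Alphabet^*$ and the envelope machinery in place, your iteration over types is exactly the paper's proof of Theorem~\ref{thm:posets2}; without them, the argument does not go through.
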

Until recently, only a few examples of structures with finite big
Ramsey degrees have been known.
As we show in Section~\ref{sec:applications} the universal homogeneous partial order represents an important new example of a structure in which many 
known examples (and some new) can be interpreted and thus follow as a direct consequence.
\medskip

The study of big Ramsey degrees originates in the work of Laver who, in 1969,
showed that the big Ramsey degrees of the order of rationals are finite~\cite[page~73]{devlin1979}, see also~\cite{erdos1974unsolved,laver1984products}. In his
argument, he re-invented the Halpern--L{\"a}uchli theorem~\cite{Halpern1966}. His
technique was later formulated more generally using the Milliken tree
theorem~\cite{Milliken1979} and the notion of envelopes and embedding types~\cite[Chapter~6]{todorcevic2010introduction}.  Most existing results in the area continue to
use the Milliken tree theorem as the primary proof technique. In particular, Devlin in
1979~\cite{devlin1979} refined Laver's argument thereby giving a precise
characterisation of the big Ramsey degrees of the order of rationals.
In 2005, this result was revived in the context of the Kechris--Pestov-Todorcevic correspondence~\cite{Kechris2005}.
In 2006,
Sauer~\cite{Sauer2006}, and Laflamme, Sauer, and Vuksanovic~\cite{Laflamme2006}
characterised big Ramsey degrees of the Rado graph (with precise counts
given by Larson~\cite{larson2008counting}). This was further generalised in
several follow-up papers~\cite{laflamme2010partition,dobrinen2016rainbow}.

Our proof of Theorem~\ref{thm:posets}, for the first time in the area, uses
spaces described by parameter words.  This leads to a finer control over the
sub-trees compared to the aforementioned constructions. Our main Ramsey tool, formulated as Theorem~\ref{thm:multCS}, is an
infinitary extension of the Graham--Rothschild theorem~\cite{Graham1971} and is
a direct consequence of the Carlson--Simp\-son theorem~\cite{carlson1984}.
While the connections of the Carlson--Simp\-son theorem, Halpern--L{\"a}uchli
theorem for trees with bounded branching and the Milliken
tree theorem are well known~\cite{carlson1984,dodos2016,todorcevic2010introduction}, so far the additional invariants
parameter spaces can preserve have not been applied in this context.

The proof technique presented in this paper is flexible and can be used to obtain additional finite big Ramsey degrees results
for restricted structures (that is, structures omitting given substructures or satisfying certain axioms).
To demonstrate this, we give a new short proof of the following recent result of
Dobrinen~\cite{dobrinen2017universal}:
\begin{theorem}[Dobrinen 2020~\cite{dobrinen2017universal}]
\label{thm:trianglefree}
The universal homogeneous triangle-free graph has finite big Ramsey degrees.
\end{theorem}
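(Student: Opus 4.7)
The plan is to mirror the strategy used for the universal homogeneous partial order, replacing the order encoding by a coding scheme tailored to triangle-free graphs. Enumerate the vertex set of $\str{H}$ as $v_0, v_1, \ldots$; each new vertex $v_n$ is determined by the set $N(v_n) \cap \{v_0, \ldots, v_{n-1}\}$, which must be an independent set, for otherwise a triangle would appear. I would encode each $v_n$ by a word $w_n$ of length $n$ over a fixed finite alphabet $\Sigma$ that includes letters marking non-adjacency and adjacency to the corresponding earlier vertex, together with one or more auxiliary letters whose role is to forbid positions that would create a triangle (if some $v_i$ is already adjacent to $v_j$, then every later vertex whose code has the ``adjacent'' letter at position $i$ must have a ``blocked'' letter at position $j$). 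The image of $\str{H}$ then sits inside a definable substructure of $\Sigma^{<\omega}$ described by local constraints on consecutive letters.

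The main Ramsey tool is Theorem~\ref{thm:multCS}, an infinitary extension of the Graham--Rothschild theorem: for any finite colouring of the $k$-parameter subwords of $\Sigma^*$ there is an infinite-dimensional parameter subspace on which the colour of a $k$-parameter subword depends only on its \emph{embedding type} inside that subspace. Given a finite triangle-free graph $\str{A}$ and a colouring $\chi$ of $\binom{\str{H}}{\str{A}}$, I would first show that every embedding $\str{A} \hookrightarrow \str{H}$ admits a canonical description as a finite-dimensional parameter subword, and that the number of possible embedding types of such subwords is finite and depends only on $|\str{A}|$ (this is the analogue of counting envelopes in the Milliken approach). A single application of Theorem~\ref{thm:multCS}, followed by a standard diagonal/fusion argument to handle different embedding types simultaneously, yields an infinite parameter subspace $W$ of $\Sigma^*$ on which $\chi$ attains only boundedly many colours on every embedding type.

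The final step is to verify that $W$, once restricted to words satisfying the triangle-avoiding constraints, still codes an isomorphic copy of $\str{H}$. By the back-and-forth characterisation of the universal homogeneous triangle-free graph, it suffices to show the one-point extension property: for any finite triangle-free graph $\str{B}$ coded inside $W$ and any compatible one-vertex extension $\str{B}^+$, a valid codeword for the new vertex can be found in $W$. This follows from the richness of parameter subspaces of $\Sigma^*$ together with the fact that the admissibility constraint is local.

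The main obstacle will be engineering the alphabet $\Sigma$ and the admissibility predicate so that two opposing requirements hold simultaneously. On one hand, the set of codewords representing $\str{H}$ must be definable by constraints that are preserved under the substitution operations of the parameter word calculus; on the other hand, the collection of admissible codewords inside any Carlson--Simpson subspace must remain rich enough to encode a universal triangle-free graph. Striking this balance, and checking that the resulting notion of embedding type is finite, is the technical core of the argument and the place where this proof diverges most sharply from the one for the universal homogeneous partial order.
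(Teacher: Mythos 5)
Your framework is the right one (code vertices of a triangle-free graph by finite parameter words, use envelopes and embedding types to get finitely many patterns, and apply Theorem~\ref{thm:multCS} repeatedly), but the proposal has a genuine gap exactly where you yourself locate ``the technical core'': you never produce the coding, and the scheme you sketch is pointed in a direction that does not work as described. You propose to code $\str{H}$ by an \emph{admissible subset} of $\Sigma^{<\omega}$ cut out by ``adjacent'', ``non-adjacent'' and ``blocked'' letters, and then, after passing to a Carlson--Simpson subspace $W$, to re-verify by a one-point extension argument that the admissible words inside $W$ still code a universal triangle-free graph. Two problems: first, the admissibility constraint (``if the code is adjacent at position $i$ and $\{i,j\}$ is an edge, then position $j$ is blocked'') is a constraint between arbitrary pairs of positions determined by the graph being built, not a ``local constraint on consecutive letters'', and it is not preserved by the substitution calculus, since substitution into a parameter word identifies the positions of a common parameter and stretches words; second, the claim that the one-point extension property inside $W$ ``follows from the richness of parameter subspaces'' is precisely the hard step, and nothing in the proposal supplies it.

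The paper avoids this obstacle entirely, and that avoidance is the key idea you are missing. It works over the one-letter alphabet $\Alphabet=\{0\}$ and takes as vertex set \emph{all} finite $1$-parameter words $\VSpace{\Alphabet}{\omega}{1}$, with $U\sim V$ (for $|U|<|V|$) iff $V_{|U|}=\lambda_0$ \emph{and} there is no $j<|U|$ with $U_j=V_j=\lambda_0$ (Definition~\ref{def:trianglefree}). The second clause makes the whole graph $\str{G}$ triangle-free by a two-line argument, while the graph is still universal (the passing-number map $\varphi$ embeds $\str{H}$, using triangle-freeness of $\str{H}$ to guarantee the no-parallel-$\lambda_0$ condition automatically). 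Crucially, both clauses are invariant under substitution by an infinite-parameter word (Observation~\ref{obs:preserve2}), so every subspace map $U\mapsto W(U)$ is a self-embedding of $\str{G}$: there is no admissibility predicate to preserve and no universality to re-prove inside the subspace. With that in place the rest is the finite iteration of Theorem~\ref{thm:multCS}, once per embedding type from Corollary~\ref{cor:types} (a finite loop composing substitutions, rather than a diagonal/fusion argument). Without an encoding with this substitution-invariance property, your plan stalls at the step you flagged, so as it stands the proposal does not constitute a proof.
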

Both results have well-known finitary counterparts.
Given a class $\mathcal K$ of structures, the \emph{(small) Ramsey degree} of
$\str{A}$ in $\mathcal K$ is the least $l\in \mathbb N\cup\{\omega\}$ such that
for every $\str{B}\in \mathcal K$ and $r\in \mathbb N$ there exists $\str{C}\in
\mathcal K$ such that $\str{C}\longrightarrow (\str{B})^\str{A}_{r,l}$.
A class $\mathcal K$ of finite structures is \emph{Ramsey} (or has the \emph{Ramsey property}) if the
small Ramsey degree of every $\str{A}\in \mathcal K$ is one.
 The Ramsey property
for finite partial orders with linear extensions was announced by Ne\v set\v
ril and R\"odl in 1984~\cite{Nevsetvril1984}.
	One year later, using a different method,  Paoli, Trotter, and Walker proved a weaker
form of this result~\cite[Lemma~15 and Theorem~16]{Trotter1985}\footnote{This proof is sometimes considered faulty   since the paper states as Theorem 2 an infinite form of the product Ramsey theorem, which is known to hold only in its finite form. However, when Theorem 2 is applied to prove Lemma~15 and Theorem~16, the following remark saves the day: ``To simplify the presentation of an argument we take $\underset{\sim}{Z}$ to be an infinite poset. Of course, we can actually choose $\underset{\sim}{Z}$ as $\underline{p}^k$ where $p$ is a sufficiently large integer.''}. The basic idea of
the proof, based on a combination of the product Ramsey theorem and the dual
Ramey theorem, was adapted by Fouch{\'e}~\cite{Fouche1997} to determine small
Ramsey degrees of partial orders.  This result directly implies
that the class of all partial orders with linear extension is Ramsey as shown by Soki\'c \cite[Theorem
7(6)]{sokic2012ramsey}.  A self-contained presentation of this strategy was given
by Solecki and Zhao~\cite{solecki2017ramsey}, generalizing
the result to multiple linear extensions.
A different approach, based on the Graham--Rothschild
theorem alone, was found by Ma{\v{s}}ulovi{\'c} in 2018~\cite{masulovic2016pre} and later generalized to multiple partial orders and linear extensions jointly with Dragani{\'c}~\cite{draganic2019ramsey}.
In the same year, the original proof
using partite construction  was published by Ne\v set\v ril and R\"odl~\cite{nevsetvril2018ramsey}, see also recent survey~\cite{HubickaKonecnySurvey}.
In Section~\ref{superpose}, we present a method for obtaining big Ramsey equivalents of generalizations given by Solecki and Zhao, and Dragani{\'c} and Ma{\v{s}}ulovi{\'c}.

While there is a general framework which can be used to show that a given class
$\K$ is Ramsey~\cite{Hubicka2016}, the situation is very different in the context of
big Ramsey degrees as, despite the recent rapid progress, only relatively few types of structures
have big Ramsey degrees of their \Fraisse{} limits understood.
The main difference is the lack of an infinite variant of the (Ne\v set\v
ril and R\"odl's) partite construction~\cite{Nevsetvril1989} (see~\cite{nevsetvril2018ramsey} for its adaptation to partial orders) which has proved to be
a very versatile tool in the structural Ramsey theory.

For several decades, it was not clear how to generalize Laver's 
proof to (countable) restricted structures or structures in languages with relations of arity three or more.  
Dobrinen's recent proof of Theorem~\ref{thm:trianglefree} ignited a significant burst of progress.
Her proof uses a new method of bounding big Ramsey degrees
inspired by Harrington's proof of the Halpern--Läuchli theorem, which uses
techniques from set-theoretic forcing and the Erd\H os--Rado theorem.  The main pigeonhole argument is a technically challenging structured tree theorem, where the tree is built
using a particular enumeration of the graph $\str{H}$ in which certain tree levels are coding
(and contain vertices of the graph being represented) while others are
branching.  This method was later generalized
to (non-oriented) Henson graphs~\cite{dobrinen2019ramsey}. Recently, Zucker
simplified it and further generalized to finitely constrained free amalgamation
classes of structures in binary languages~\cite{zucker2020}. Zucker's proof is still
based on a structured pigeonhole proved by forcing techniques, but it greatly
simplifies the trees by eliminating distinction between coding and branching levels.
While this simplification gives larger upper bounds than one given by Dobrinen,
recently this technique has been refined to characterise degrees precisely. See \cite{Balko2021exact} and Section~\ref{sec:optimality}.

Bounds on big Ramsey degrees of
unrestricted structures with arities greater than 2 were announced in~\cite{Hubickabigramsey} with a proof based
on the vector (or product) form Milliken tree theorem~\cite{Hubicka2020uniform}. This technique was further generalized
to infinite languages~\cite{braunfeld2023big}. We believe that this represents the strongest possible big Ramsey results
based directly on unmodified form of the Milliken tree theorem.
This paper is motivated by the opposite direction and builds on proof techniques used for giving bounds on small Ramsey degrees
where the Graham--Rothschild Theorem is a common tool. 

We shall also remark that Theorem~\ref{thm:multCS} has a direct proof\footnote{By ``direct'' we mean that the proof is elementary
and does not use set-theoretic forcing, ultrafilters or topological dynamics.} based
on a combinatorial forcing argument (see the proof of the ``key lemma'' in~\cite{carlson1984} or Theorem 2 of \cite{Karagiannis2013}).
Consequently, we obtain the first direct (and simple) proof of Theorem~\ref{thm:trianglefree}. 

\medskip
The paper is organised as follows. In Section~\ref{sec:preliminaries} we
introduce parameter spaces.  In Section~\ref{sec:envelopes} we introduce
the corresponding notion of envelopes and embedding types.  In Section~\ref{sec:bigdegrees}
we prove the main results of this paper. In Section~\ref{sec:smallclasses} we show
that the construction is tight for determining small Ramsey degrees and thus
give a new proof of a special case of the Ne\v set\v ril--R\"odl
theorem~\cite{Nevsetvril1977}. This makes the connection between finitary and infinitary structural Ramsey results more explicit. In Section~\ref{sec:applications} we discuss several corollaries.
In Section~\ref{sec:concluding} we briefly outline ongoing work and further directions
to generalize techniques of this paper.
\section{Preliminaries}\label{sec:preliminaries}
We use the standard model-theoretic notion of relational structures.
Let $L$ be a language with relation symbols $\rel{}{}\in L$ each having its {\em arity}.
An \emph{$L$-structure} $\str{A}$ on $A$ is a structure with {\em vertex set} $A$ and relations $\rel{A}{}\subseteq A^r$ for every symbol $\rel{}{}\in L$ of arity $r$.  If the set $A$ is finite, we call $\str A$ a \emph{finite structure}. 
We will always use bold letters $\str{A},\str{B},\ldots$ to denote structures and $A,B,\ldots$ to denote their corresponding underlying sets.
We consider only structures with finitely many or countably infinitely many vertices.

Given two $L$-structures $\str{A}$ and $\str{B}$, a function $f\colon A\to B$ is an \emph{embedding} $f\colon \str{A}\to\str{B}$ if it is
injective and for every $\rel{}{}\in L$ of arity $r$ we have that $$(v_1,v_2,\ldots, v_r)\in \rel{A}{}\iff (f(v_1),f(v_2),\ldots, f(v_r))\in \rel{B}{}.$$ 
We say that $\str{A}$ and $\str{B}$ are \emph{isomorphic} if there is an embedding $f\colon \str{A}\to \str{B}$ that is onto.

As usual in the structural Ramsey theory, given an embedding $f\colon \str{A}\to\str{B}$ we will call the image of $\str{A}$ in $\str{B}$ (denoted by $f(\str{A})$) a \emph{copy} of $\str{A}$ in $\str{B}$.
A structure $\str{A}$ is \emph{rigid} if the only automorphism of $\str{A}$ (that is, isomorphism $\str A\to \str A$) is the
identity. 

\subsection{Parameter words and spaces}
Given a finite alphabet $\Alphabet$ and $k\in \omega\cup \{\omega\}$, a \emph{$k$-parameter word} is a (possibly infinite) string $W$ in
alphabet $\Alphabet\cup \{\lambda_i\colon 0\leq i<k\}$ containing each
of $\lambda_i$, $0\leq i < k$, such that for every $1\leq j < k$, the first
occurrence of $\lambda_j$ appears after the first occurrence of $\lambda_{j-1}$. 
Given a parameter word $W$, we denote by $|W|$ its \emph{length} and for every $0\leq j < |W|$ by $W_j$ the letter (or parameter) on index $j$. (Note that the first letter of $W$ has index $0$).
A $0$-parameter word is simply a \emph{word}. We will generally denote words by lowercase letters and parameter words by uppercase letters.

Let $W$ be an $n$-parameter word and let $U$ be a parameter word of length $k\leq n$ (where $k,n\in \omega\cup\{\omega\}$). Then we denote by
$W(U)$ the parameter word created by \emph{substituting} $U$ to $W$. More precisely, this is a parameter word created from $W$ by replacing each occurrence of $\lambda_i$, $0\leq i < k$, by $U_i$ and truncating it just
before the first occurrence of $\lambda_k$ (in $W$).
Given an $n$-parameter word $W$ and set $S$ of parameter words of length at most $n$, we denote by $W(S)$ the set $\{W(U)\colon U\in S\}$.

Given $k\leq n\in \omega\cup \{\omega\}$ we denote by $\Space{\Alphabet}{n}{k}$ the set of all $k$-parameter words of
length $n$. If $k$ is finite we also denote by $$\VSpace{\Alphabet}{n}{k}=\bigcup_{i\leq n,i\in \omega}\: \Space{\Alphabet}{i}{k}$$ the set of all finite $k$-parameter words
of length at most $n$.
For brevity we denote by  $\Alphabet^*$  the set $\VSpace{\Alphabet}{\omega}{0}$ of all words on the alphabet $\Alphabet$ with finite length and no parameters.
Given an $n$-parameter word $W$ and integer $k<n$, we call $W(\VSpace{\Alphabet}{n}{k})$ the
\emph{$k$-dimensional subspace} described by $W$.
We will denote by $\emptyset$ the empty word.

We will make use of the following infinitary variant of the Graham--Rothschild
Theorem~\cite{Graham1971} which is a direct consequence of the Carlson--Simp\-son
theorem \cite{carlson1984}.  This theorem was also obtained by Voigt around
1983 in a manuscript which, to our knowledge, was never published (see,
i.e.,~\cite[Theorem A]{promel1985baire}, \cite{carlson1987infinitary}).
\begin{theorem}
\label{thm:multCS}
Let $\Alphabet$ be a finite alphabet and $k\geq 0$ a finite integer.
If the set $\VSpace{\Alphabet}{\omega}{k}$ is coloured by finitely many colours, then there exists
an infinite-parameter word $W$ such that $W\left(\VSpace{\Alphabet}{\omega}{k}\right)$ is monochromatic.
\end{theorem}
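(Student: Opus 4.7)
My plan is to deduce Theorem~\ref{thm:multCS} from its $k=0$ case, the classical Carlson--Simpson theorem, by a reduction on an enlarged alphabet. Introduce fresh letters $\alpha_0,\ldots,\alpha_{k-1}$, set $\Sigma^{+}:=\Sigma\cup\{\alpha_0,\ldots,\alpha_{k-1}\}$, and let $\iota:\VSpace{\Sigma}{\omega}{k}\to (\Sigma^{+})^{*}$ be the injection that replaces every parameter $\lambda_i$ by the letter $\alpha_i$. Its image $\mathcal{V}$ is the subset of $(\Sigma^{+})^{*} = \VSpace{\Sigma^{+}}{\omega}{0}$ consisting of words in which every $\alpha_i$ appears with first occurrences in the order $\alpha_0,\alpha_1,\ldots,\alpha_{k-1}$. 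Given the finite colouring $\chi$ of $\VSpace{\Sigma}{\omega}{k}$, extend it to a finite colouring $\chi^{+}$ of $(\Sigma^{+})^{*}$ by setting $\chi^{+}(\iota(U))=\chi(U)$ on $\mathcal{V}$ and assigning a fresh garbage colour to every word outside~$\mathcal{V}$.

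Apply Carlson--Simpson to $\chi^{+}$ to obtain an infinite parameter word $W^{+}$ over $\Sigma^{+}$, with parameters $\mu_0,\mu_1,\ldots$, such that $W^{+}((\Sigma^{+})^{*})$ is $\chi^{+}$-monochromatic. In the favourable case when the fixed part of $W^{+}$ lies entirely in $\Sigma$, the desired $W$ is obtained by simply renaming $\mu_j$ to $\lambda_j$: a direct substitution-tracking argument gives $\iota(W(U)) = W^{+}(\iota(U)) \in W^{+}((\Sigma^{+})^{*})$ for every $U\in\VSpace{\Sigma}{\omega}{k}$, and since $\iota(W(U))\in\mathcal{V}$ (because $W(U)$ is automatically a valid $k$-parameter word over $\Sigma$), the monochromatic colour must be an original $\chi$-colour, so $\chi$ is constant on $W(\VSpace{\Sigma}{\omega}{k})$.

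The main obstacle is that Carlson--Simpson provides no a~priori control over the fixed part of $W^{+}$, which may contain $\alpha_i$'s in arbitrary positions and orders that cannot be removed by substitution into $W^{+}$. I plan to address this by refining $\chi^{+}$ prior to applying Carlson--Simpson, adjoining to the colour a finite structural invariant such as the order of first occurrences of $\alpha_0,\ldots,\alpha_{k-1}$ in the coloured word (a quantity taking at most $k!+1$ values). Monochromaticity of this richer colouring on $W^{+}((\Sigma^{+})^{*})$ then constrains the fixed-part $\alpha$-occurrences to a prescribed order lying entirely before the first parameter $\mu_0$ of $W^{+}$; a preparatory substitution $W^{+}(V)$ for a carefully chosen infinite parameter word $V$ over $\Sigma^{+}$ can then be used to realign this order with the identity and absorb the remaining fixed-part $\alpha_i$'s into extra occurrences of the corresponding $\lambda_i$'s of $W$. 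The technical heart of the argument will be verifying that after these manipulations the monochromatic colour cannot be the garbage colour---equivalently, that at least one element of $W^{+}((\Sigma^{+})^{*})$ lies in $\mathcal{V}$---which forces the monochromatic colour to be a $\chi$-colour and delivers the theorem.
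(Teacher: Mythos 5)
Your reduction cannot be completed, and the obstacle you flag at the end is not a removable technicality but the exact point where the approach breaks. First, the garbage colour genuinely cannot be excluded when $k\geq 2$: there are infinite-parameter words over $\Sigma^{+}$ \emph{all} of whose substitution instances lie outside $\mathcal V$, for instance any $W^{+}$ whose fixed part begins $\alpha_{1}\alpha_{0}\alpha_{2}\cdots\alpha_{k-1}$ followed only by parameters; on such a subspace your refined colouring (garbage colour together with the order of first occurrences of the $\alpha_i$'s) is constant, so Carlson--Simpson may legitimately return it, and you learn nothing about $\chi$. (For $k=1$ this particular problem disappears, but the next one does not.) Second, and more fundamentally, the informative outcome is also useless. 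If the monochromatic colour is a $\chi$-colour, then in particular $W^{+}(\emptyset)\in\mathcal V$, so the fixed prefix of $W^{+}$ before $\mu_0$ already contains all of $\alpha_0,\dots,\alpha_{k-1}$; consequently every decoded word $\iota^{-1}(W^{+}(v))$ has the first occurrence of each $\lambda_i$ pinned at one and the same position inside that prefix. But for \emph{any} infinite-parameter word $W$ over $\Sigma$ the family $W\left(\VSpace{\Sigma}{\omega}{k}\right)$ contains the images of $\lambda_0\lambda_1\cdots\lambda_{k-1}$, $0\lambda_0\lambda_1\cdots\lambda_{k-1}$, $00\lambda_0\lambda_1\cdots\lambda_{k-1}$, \dots, whose first occurrences of $\lambda_0$ sit at the first occurrences of the successive parameters of $W$, hence at infinitely many distinct positions. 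So no family of the required form is contained in the set of words whose colour you actually control, and no preparatory substitution $W^{+}(V)$ can repair this, because substitution never changes the fixed part of $W^{+}$. The proposed ``absorption'' of the fixed $\alpha_i$'s into parameter occurrences of the final $W$ has no counterpart in the formalism: a fixed letter of $W$ cannot become a parameter of $W(U)$, and a parameter of $W$ cannot be forced to receive $\lambda_i$ under every substitution. In short, the encoding $\lambda_i\mapsto\alpha_i$ collapses precisely the extra strength that the $k\geq 1$ statement has over the $k=0$ statement.

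This also does not match how the paper handles the theorem: it is not derived there from the $k=0$ case at all, but quoted as a known infinitary Graham--Rothschild statement, a direct consequence of the full Carlson--Simpson theorem~\cite{carlson1984} (also obtained by Voigt), with an alternative direct proof via Theorem~2 of~\cite{Karagiannis2013}. If you want a self-contained argument, you should either invoke the $k$-parameter form of the Carlson--Simpson theorem directly, or follow one of those routes (for example building the infinite-parameter word by a fusion argument in which the finite Theorem~\ref{thm:multCSfin} supplies the stabilisation at each stage); a one-shot recoding into a larger alphabet will not do.
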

We will also make use of the following finite version of Theorem~\ref{thm:multCS}.
\begin{theorem}
\label{thm:multCSfin}
Let $\Alphabet$ be a finite alphabet, $0\leq k\leq n$ and $r>0$ finite integers.
Then there exists $N=N(|\Alphabet|,k,n,r)$ such that
for every $r$-colouring of $\VSpace{\Alphabet}{N}{k}$ there exists a
word $W\in\VSpace{\Alphabet}{N}{n}$ such that $W\left(\VSpace{\Alphabet}{n}{k}\right)$ is monochromatic.
\end{theorem}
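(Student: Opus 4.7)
The plan is to deduce Theorem~\ref{thm:multCSfin} from the finite Graham--Rothschild theorem via the encoding trick sketched just before the statement. The idea is that a $k$-parameter word of length \emph{at most} $N$ can be faithfully represented as a $(k+1)$-parameter word of length \emph{exactly} $N+1$ by appending $\lambda_k$ followed by padding from $\Alphabet$; the original word is then recovered by truncating just before the first occurrence of $\lambda_k$.

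Explicitly, I would fix a letter $a\in\Alphabet$ and define
$\phi\colon \VSpace{\Alphabet}{n}{k}\to \Space{\Alphabet}{n+1}{k+1}$ by $\phi(u)=u\cont\lambda_k\cont a^{\,n-|u|}$, together with its one-sided inverse
$\psi\colon \Space{\Alphabet}{N+1}{k+1}\to\VSpace{\Alphabet}{N}{k}$ given by truncating before the first $\lambda_k$; clearly $\psi\circ\phi=\mathrm{id}$. Given an $r$-colouring $c$ of $\VSpace{\Alphabet}{N}{k}$, lift it to $c'(T):=c(\psi(T))$ on $\Space{\Alphabet}{N+1}{k+1}$. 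Now invoke the finite Graham--Rothschild theorem with sub-dimension $k+1$, super-dimension $n+1$, and $r$ colours to obtain $N_0$; set $N:=\max(N_0-1,n)$. This yields $V\in\Space{\Alphabet}{N+1}{n+1}$ such that $V\bigl(\Space{\Alphabet}{n+1}{k+1}\bigr)$ is $c'$-monochromatic.

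Finally, set $W$ to be the truncation of $V$ just before the first occurrence of $\lambda_n$, so $W\in\VSpace{\Alphabet}{N}{n}$. The crux is the identity
\[
W(u)\;=\;\psi\bigl(V(\phi(u))\bigr)\qquad\text{for every }u\in\VSpace{\Alphabet}{n}{k},
\]
after which $c(W(u))=c'(V(\phi(u)))$ is constant by the choice of $V$. To verify the identity, observe that in $\phi(u)$ the parameter $\lambda_k$ first appears at position $|u|$, so the first $\lambda_k$ in $V(\phi(u))$ is inserted at the first occurrence of $\lambda_{|u|}$ in $V$. Truncating there gives the prefix of $V$ before its first $\lambda_{|u|}$, with each $\lambda_i$ replaced by $u_i$ for $i<|u|$; this is precisely $W(u)$, using that $W$ is the longer prefix of $V$ ending just before the (strictly later) first occurrence of $\lambda_n$.

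I expect the main obstacles to be only bookkeeping: checking that $\phi(u)$ is genuinely a valid parameter word (true because $u$ lists $\lambda_0,\dots,\lambda_{k-1}$ in order before $\lambda_k$ is appended); handling the boundary $|u|=n$ (where $\lambda_n$ does not occur in $W$ and no truncation is needed on either side of the identity); and ensuring $N\ge n$ so that $\VSpace{\Alphabet}{N}{n}$ is nonempty. None of these require new ideas beyond the encode/decode translation, which is why the entire argument reduces cleanly to a single application of Graham--Rothschild.
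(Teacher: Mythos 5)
Your proposal is correct and takes essentially the same route as the paper: Theorem~\ref{thm:multCSfin} is obtained there by exactly this reduction to the finite Graham--Rothschild theorem, identifying $k$-parameter words of length at most $n$ with $(k+1)$-parameter words of length exactly $n+1$ via truncation just before the first occurrence of the top parameter (the paper phrases it with the index shifted, assigning to each $W\in \Space{\Alphabet}{n+1}{k}$ its truncation $W'\in \VSpace{\Alphabet}{n}{k-1}$). You merely spell out the padding map $\phi$, the lifted colouring and the substitution identity $W(u)=\psi\bigl(V(\phi(u))\bigr)$, which the paper leaves as a one-sentence remark before the statement.
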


\section{Envelopes and embedding types}\label{sec:envelopes}
Essentially all big Ramsey degree results are based on a notion of envelope and embedding
type introduced by Laver and Milliken, see~\cite[Section 6.2]{todorcevic2010introduction}.
Precise definitions depend on the notion of a subspace (or a subtree). The following introduces
these concepts in the context of parameter spaces.
\begin{definition}
\label{def:envelope}
Given a finite alphabet $\Alphabet$, a set $S$ of parameter words in alphabet $\Alphabet$ and a parameter word $W$ in alphabet $\Alphabet$, we say that $W$ is an \emph{envelope} of $S$ if for every $U\in S$, there exists a parameter word $U'$ such that $W(U')=U$.
We call the envelope $W$ \emph{minimal} if there is no envelope of $S$ with fewer parameters than $W$.
\end{definition}
\begin{example}
\label{ex1}
Consider $\Alphabet=\{0\}$
The set $S=\{0,000\}\subseteq \VSpace{\Alphabet}{\omega}{0}$ has two minimal envelopes: $0\lambda_0\lambda_0$ and $0\lambda_00$.
Parameter word $\lambda_0\lambda_1\lambda_2\lambda_3$ is also an envelope of $S$, but it is not a minimal envelope.
\end{example}
\begin{prop}
\label{prop:subspace}
Let $\Alphabet$ be a finite alphabet, let $k\geq 0$ be a finite integer, let $S$ be a non-empty finite set of finite parameter words in alphabet $\Alphabet$ with each $U\in S$ having at most $k$ parameters and let $W$ be a minimal envelope of $S$. Then $W$ has at most $(|\Alphabet|+k)^{|S|}+|S|-|\Alphabet| - 1$ parameters.
Moreover, for every parameter $\lambda_i$ of $W$ and every minimal envelope $W'$ of $S$ it holds that the first occurrence of $\lambda_i$ has the same position in $W$ and $W'$.
\end{prop}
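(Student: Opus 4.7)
The plan is to prove both parts simultaneously by analysing, for each position $j \ge 0$, the \emph{column} $c_j \colon S_j \to \Sigma \cup \{\lambda_0, \dots, \lambda_{k-1}\}$ defined by $c_j(U) = U_j$ on the active set $S_j := \{U \in S : |U| > j\}$. Enumerating the distinct lengths occurring in $S$ as $L_1 < L_2 < \dots < L_t$ (and setting $L_0 := 0$), the active set $S_j$ is constant on each interval $[L_{r-1}, L_r)$, equalling $S'_r := \{U \in S : |U| \ge L_r\}$. The first step is a rigidity observation: for any envelope $W$ of $S$ and any $U \in S$, the substitution word $U'_U$ with $W(U'_U) = U$ is unique and satisfies $(U'_U)_i = U_{p_i}$ whenever the first occurrence $p_i$ of $\lambda_i$ lies below $|U|$. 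Consequently every occurrence of $\lambda_i$ in $W$ carries the forced column $c_{p_i}$ restricted to the active set at that occurrence, and two parameter positions $j < j'$ can belong to the same $\lambda_i$ if and only if $c_j \restriction S_{j'} = c_{j'}$; I call this \emph{compatibility} and note it is an equivalence relation.

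Next I would characterise, in a minimal $W$, which positions carry parameters: position $j$ must be a parameter position precisely when either (a) $c_j$ is not constant with value in $\Sigma$, or (b) $j = L_r$ for some $r \le t - 1$ (an ``endpoint''), the latter because for a word $U \in S$ of length $L_r$ to be expressible as $W(U'_U)$ the first occurrence of some parameter has to coincide with $L_r$. Both (a) and (b) depend only on $S$, as does the compatibility relation; hence the set of parameter positions and the compatibility classes are intrinsic to $S$. Minimality of $W$ forces all parameter positions in a single class to share the same parameter, so the parameters of $W$ are in bijection with these classes and each first occurrence $p_i$ is the minimum of its class in increasing order. This yields the ``moreover'' statement. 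A further consequence of (b) is that every endpoint $L_r$ with $r \le t - 1$ is necessarily the minimum of its own class: reusing an earlier parameter would shift its first occurrence away from $L_r$, obstructing any $U \in S$ of length $L_r$.

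For the quantitative bound I split the classes into \emph{endpoint classes} (minimum is an endpoint) and \emph{non-endpoint classes}. There are at most $t - 1 \le |S| - 1$ endpoint classes, one per value of $r \le t - 1$. For a non-endpoint class with minimum $p_i$ in interval $r_i$, fix once and for all a letter $a_0 \in \Sigma$ and extend the source column $c_{p_i} \colon S'_{r_i} \to \Sigma \cup \{\lambda_0, \dots, \lambda_{k-1}\}$ to $\hat f_i \colon S \to \Sigma \cup \{\lambda_0, \dots, \lambda_{k-1}\}$ by $\hat f_i(U) := a_0$ for $U \notin S'_{r_i}$. If $\hat f_i = \hat f_j$ for two non-endpoint classes with $p_i < p_j$, then $c_{p_i} \restriction S'_{r_j} = c_{p_j}$, so $p_i$ and $p_j$ are compatible; since neither is an endpoint, the minimality argument of the previous paragraph merges them into a single class, a contradiction. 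Also $\hat f_i$ is never a constant function with value in $\Sigma$: such a constant would force $c_{p_i}$ to be a constant letter, contradicting that the non-endpoint $p_i$ carries a parameter in a minimal envelope. Hence the non-endpoint classes inject into non-constant-$\Sigma$ functions $S \to \Sigma \cup \{\lambda_0, \dots, \lambda_{k-1}\}$, of which there are at most $(|\Sigma|+k)^{|S|} - |\Sigma|$. Summing, $m \le (|\Sigma|+k)^{|S|} - |\Sigma| + (|S|-1) < (|\Sigma|+k)^{|S|} + |S| - |\Sigma|$, as claimed.

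The main obstacle is the injectivity of $\lambda_i \mapsto \hat f_i$ on non-endpoint classes; this rests on the compatibility-merger not being blocked whenever neither $p_i$ nor $p_j$ is an endpoint, which is precisely why the endpoint classes are separated out and accounted for by the additive $|S| - 1$ term.
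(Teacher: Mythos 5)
Your overall strategy --- analysing columns (the paper's ``slices''), deciding when two positions may carry the same parameter, and counting the mutually incompatible possibilities --- is essentially the paper's strategy, but the way you set up compatibility contains a genuine error that propagates. You define compatibility of positions $j<j'$ purely by column matching ($c_j$ restricted to $S_{j'}$ equals $c_{j'}$) and assert this is equivalent to ``can belong to the same $\lambda_i$''. The ``if'' direction is false: whenever $j'=|U|$ for some $U\in S$, position $j'$ must be the \emph{first} occurrence of a parameter in every envelope (because $|W(U')|$ is the position of the first occurrence of $\lambda_{|U'|}$ in $W$), no matter what the columns are. Concretely, take $\Sigma=\{0,1\}$, $k=0$, $S=\{01,\,000,\,011\}$: positions $1$ and $2$ have matching columns on $S_2=\{000,011\}$, so under your definition they form a single class with minimum $1$; yet the unique minimal envelope is $0\lambda_0\lambda_1$, in which they carry \emph{different} parameters. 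This one example refutes, simultaneously, your claims that minimality forces a whole class to share one parameter, that the parameters of $W$ are in bijection with the classes, that each first occurrence is the minimum of its class, and that every endpoint is the minimum of its own class --- and with them your derivation of the ``moreover'' statement, while the literal class count undercounts the parameters. Two further assertions are also wrong as stated: the set of parameter positions is \emph{not} intrinsic to $S$ (in Example~\ref{ex1} the minimal envelopes $0\lambda_0\lambda_0$ and $0\lambda_00$ disagree at position $2$; only the first occurrences are intrinsic, which is precisely what has to be proved), and pure column matching is not an equivalence relation (with one word of length $5$ and one of length $10$, columns at positions $1$ and $2$ can both agree with the column at position $7$ on the long word while disagreeing on the short one).

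The repair is exactly the clause the paper builds into compatibility of slices: a position $j'$ with $j'=|U|$ for some $U\in S$ is declared incompatible with every earlier position (the condition ``$W^p_j=\ast$ and $j\neq|W^p|$'' in the paper's proof). With that clause, the two sound arguments you sketch --- merging $\lambda_j$ into an earlier $\lambda_i$ with matching columns when $p_j$ is not the length of any word of $S$, and replacing a parameter whose column is a constant letter by that letter --- do apply to first occurrences in a minimal envelope, and they give both halves of the proposition: pairwise incompatibility of first occurrences yields the injection into non-constant functions plus at most $|S|-1$ endpoint parameters (hence the bound $(|\Sigma|+k)^{|S|}+|S|-|\Sigma|$), and the intrinsic description of first occurrences yields the ``moreover'' part. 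At that point your proof coincides, up to bookkeeping of the endpoint classes, with the paper's count of mutually incompatible slices.
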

\begin{proof}
Fix $\Alphabet$, $k$, and $S$. Assume that $\Alphabet$ does not contain symbols $\dagger$ and $\ast$ which we will use with special meaning later.
Put $S=\{V^0,V^1,\ldots, V^{\ell-1}\}$.
We now show a method for constructing an envelope $W$.

	Put $m=\max_{0\leq i< \ell}(|V^i|)$ and for every $0\leq i< m$ define the \emph{slice $s^i$}, as the sequence (word) of length $\ell$ where we put
	$$s^i_j=\begin{cases}
		V^j_i & \hbox{if $i<|V^j|$,}\\
		\dagger & \hbox{if $i=|V^j|$,}\\
		\ast & \hbox{if $i>|V^j|$,}
	\end{cases}
	$$
	for every $j<\ell$.
Given $i$ and $j$ satisfying $0\leq i\leq j<m$ we say that slice $s^i$ is \emph{compatible} with slice~$s^j$ if for every $0\leq p<\ell$ it holds
that either $V^p_i=V^p_j$ or $V^p_j=\ast$. (In this case, the slice $s^j$ can be represented by the same parameter as the earlier slice $s^i$.)

Now construct a word $W$ of length $m$ by putting for every $0\leq j\leq m$
$$
W_j=
\begin{cases}
s & \hbox{if  slice $s^j$ consists only of $\ast$ and $s$ for some $s\in \Alphabet$,}\\
	W_{j'} & \hbox{if there exists $0\leq j'<j$ such that slice $s^{j'}$ is compatible}\\
&\hbox{with slice~$s^j$ and $j'$ is the minimal index with this property,}\\
\lambda_p & \hbox{otherwise, where $\lambda_p$ is the least so far unused parameter.}
\end{cases}
$$
It follows from the construction that $W$ is a parameter word and an envelope of $S$. We verify the minimality by checking that introduction of all parameters
is necessary.
Assume that $W_j$ is a first occurrence of parameter $\lambda_i$.  It follows that at least one of the following occurred:
	\begin{enumerate}
		\item There is a word $U\in S$ with $|U|=j$ (so slice $s^j$ contains $\dagger$).
		\item Slice $s_j$ either contains a parameter or at least two different letters in $S$, and there is no $j'<j$ such that slice $s^{j'}$ is compatible with slice $s^j$.
	\end{enumerate}
In both cases it holds that every envelope of $U$ must also introduce a new parameter $\lambda_i$, thereby giving the optimality of $W$ as well as the moreover part of the statement.

Notice that there are at most $|\Sigma+k|^{|S|}$ different slices not involving symbols $\ast$ and $\dagger$ and each slice not containing $\dagger$ is compatible with at least one of them. $|\Sigma|$ of these slices consist of a single letter $s\in \Sigma$ only and hence do not trigger the introduction of a new parameter.  There are at most $|S|-1$ slices containing $\dagger$ which always introduce a new parameter.
	 This leads to the upper bound of $(|\Alphabet|+k)^{|S|}+|S|-|\Alphabet| - 1$ parameters.
\end{proof}
\begin{definition}
\label{def:type}
Given a finite alphabet $\Alphabet$, a finite integer $k\geq 0$, a set $S$ of parameter words in alphabet $\Alphabet$ and an envelope $W$ of $S$, the \emph{embedding type} of $S$ in $W$, denoted by $\tau_W(S)$, is the set of parameter words such that $W(\tau_W(S))=S$.
\end{definition}
\begin{example}
The set $S=\{0,000\}$ has embedding type $\{\emptyset,0\}$ in both minimal envelopes given in Example~\ref{ex1}.
\end{example}

\begin{corollary}
\label{cor:types}
Let $\Alphabet$ be a finite alphabet and let $k,\ell>0$ be finite integers. Then
\begin{enumerate}
\item the set
$$\{\tau_W(S):S\subseteq \VSpace{\Alphabet}{\omega}{k}, |S|=\ell, W\hbox{ is a
minimal envelope of S}\}$$ is finite, and,
\item for every finite set  $S\subseteq \VSpace{\Alphabet}{\omega}{k}$ and its minimal envelopes
$W$ and $W'$ it holds that $\tau_W(S)=\tau_{W'}(S)$.
\end{enumerate}
\end{corollary}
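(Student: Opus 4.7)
The plan is to derive both parts directly from Proposition~\ref{prop:subspace}, with its ``moreover'' clause on the positions of first occurrences of parameters doing the real work in part~(2).

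For part~(1), I would combine the bound on the number of parameters with a routine counting argument. Proposition~\ref{prop:subspace} supplies a uniform bound $D := (|\Alphabet|+k)^{\ell}+\ell-|\Alphabet|$ on the dimension of any minimal envelope $W$ of an $\ell$-element set $S \subseteq \VSpace{\Alphabet}{\omega}{k}$. Every $U \in \tau_W(S)$ is a parameter word of length at most $D$ with at most $k$ parameters over $\Alphabet$; the length bound is immediate, and the parameter bound follows since the parameters of $W(U)$ are precisely those appearing in $U$ (each $\lambda_i$ with $i<|U|$ occurs at least once in $W$ before the first $\lambda_{|U|}$), while $W(U)\in S$ has at most $k$ parameters. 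There are only finitely many such parameter words, hence only finitely many $\ell$-element subsets $\tau_W(S)$.

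For part~(2), let $W$ and $W'$ be two minimal envelopes of the same $S$. By Proposition~\ref{prop:subspace} they share a common dimension $d$, and for each $i<d$ the first occurrence of $\lambda_i$ lies at a common position $p_i$ in both; set $p_d := |W| = |W'|$. For any $W^j \in S$, I would argue that the unique parameter word $U^j$ with $W(U^j) = W^j$ is completely reconstructible from $W^j$ and the sequence $(p_0,\ldots,p_d)$: the definition of substitution forces $|W(U^j)|$ to equal the position just before the first $\lambda_{|U^j|}$ in $W$, so $|U^j|$ is the unique index $i$ with $p_i = |W^j|$; and since $W$ carries $\lambda_i$ at position $p_i$, the equation $W(U^j)=W^j$ pins down $U^j_i = W^j_{p_i}$ for each $i<|U^j|$. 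Running the identical recipe with $W'$ in place of $W$ produces the same $U^j$, and doing this uniformly over $j$ gives $\tau_W(S) = \tau_{W'}(S)$.

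The principal subtlety is simply unwinding the truncation convention to confirm that $(p_0,\ldots,p_d)$ pins down $|U^j|$ unambiguously, including the boundary case $|U^j|=d$ corresponding to $|W^j|=|W|$; the strict inequalities $p_0 < p_1 < \cdots < p_{d-1} < p_d$ make this straightforward. Beyond this short verification, both assertions are essentially immediate consequences of Proposition~\ref{prop:subspace}.
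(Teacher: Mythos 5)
Your proposal is correct and follows the paper's intended route: the corollary is stated there as an immediate consequence of Proposition~\ref{prop:subspace}, and your counting argument for part (1) together with the reconstruction of each $U^j$ from the common first-occurrence positions $p_i$ for part (2) is exactly the elided verification. The only step you assert without argument is $|W|=|W'|$; it is true (any minimal envelope has length $\max_{U\in S}|U|$, since otherwise truncating just before the first occurrence of its last parameter would yield an envelope of smaller dimension), and in any case your recipe can be run without it because no $p_i$ with $i<d$ can coincide with $|W|$ or $|W'|$.
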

\begin{proof}
	The first statement follows from the fact that there is an upper bound (given by Proposition~\ref{prop:subspace}) on the number of parameters of minimal envelopes and thus also on the length of words in the sets $\tau_W(S)$, $S\in \VSpace{\Alphabet}{\omega}{k}$.

	To see the second statement, consider $W$ and $W'$ to be minimal envelopes of a given set $S\in \VSpace{\Alphabet}{\omega}{k}$.
	Let $V$ be some word in $S$. Let $U\in \tau_W(S)$ be a word such that $W(U)=V$ (which exists since $W$ is an envelope). Clearly $|U|=i$ where $i$ satisfies $W_{|V|} = \lambda_i$ or $|U|$ is the number of parameters of $W$ if $|V|=|W|$.
	For every $i'<|U|$ we have $U_{i'}=V_{j'}$ where $j'$ is minimal such that $W_{j'}=\lambda_{i'}$. It follows that $U$ is unique.

	Now let $U'\in \tau_{W'}(S)$ be a word such that $W'(U')=V$. By the same argument as above we get that $U'$ is unique, and using the moreover part of Proposition~\ref{prop:subspace} it follows that $|U|=|U'|$ and $U_i=U'_i$ for every $i<|U|$.
\end{proof}
As a consequence of Corollary~\ref{cor:types}, we can use $\tau(S)$ for
$\tau_W(S)$ where $W$ is some (any) minimal envelope of $S$.
\begin{remark}
Our Definitions~\ref{def:envelope} and \ref{def:type} are closely related to
the definition of envelopes and types used by Dodos, Kanellopoulos and
Tyros~\cite{dodos2014} and by F\" urstenberg and
Katznelson~\cite{furstenberg1989}, see also~\cite[Chapter~5]{dodos2016}.  The
main difference is however the use of subspaces defined by variable words
rather than parameter words.  With respect to this notion of subspaces, the
dimension of minimal envelopes and thus also the number of embeddings types is not bounded by the
size of the set.
\end{remark}

\section{Big Ramsey degrees}\label{sec:bigdegrees}
In this section we prove Theorems~\ref{thm:posets} and \ref{thm:trianglefree}.
We start with Theorem~\ref{thm:trianglefree} and later show that Theorem~\ref{thm:posets} follows by very similar arguments.
\subsection{Triangle-free graphs}
\label{sec:trianglefree}
In this section we consider graphs to be structures in a language consisting of a single binary relation $E$.
We fix alphabet $\Alphabet=\{0\}$.

\begin{definition}
\label{def:trianglefree}
We define graph $\str{G}$ as follows:
\begin{enumerate}
  \item The vertex set $G$ is $\VSpace{\Alphabet}{\omega}{1}$ (that is, the set of all finite 1-parameter words).
  \item Given two vertices $U$ and $V$ such that $|U|<|V|$, we put an edge between $U$ and $V$ if and only if
\begin{enumerate}[label=(\roman*)]
\item\label{item:passing} $V_{|U|}=\lambda_0$ and 
\item\label{item:parallel} for no $0\leq j<|U|$ it holds that $U_j=V_j=\lambda_0$.
\end{enumerate}
\end{enumerate}
There are no other edges.
\end{definition}
\begin{remark}
Condition \ref{item:passing} in Definition~\ref{def:trianglefree} is the passing number representation of the Rado graph
used by Sauer~\cite{Sauer2006} (see also~\cite[Theorem~6.25]{todorcevic2010introduction}). Condition~\ref{item:parallel} is similar to Dobrinen's parallel 1's criterion~\cite[Definition 3.7]{dobrinen2017universal}.
The notion of subtree (or a subspace) used here is however different from~\cite{Sauer2006} and \cite{dobrinen2017universal}.
\end{remark}
\begin{lemma}
$\str{G}$ is triangle-free.
\end{lemma}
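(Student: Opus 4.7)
The plan is to argue by contradiction, exploiting the two conditions \ref{item:passing} and \ref{item:parallel} in Definition~\ref{def:trianglefree} in a single elementary way.

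Suppose $\{U,V,W\}$ spans a triangle in $\str{G}$. Since the edge relation is only defined between parameter words of distinct length (the definition requires $|U|<|V|$), the three lengths must be pairwise distinct, so after relabelling we may assume $|U|<|V|<|W|$. I will show this configuration is impossible.

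Apply condition~\ref{item:passing} to the two edges that have $U$ as the shorter endpoint. Since $UV$ is an edge and $|U|<|V|$, we have $V_{|U|}=\lambda_0$. Since $UW$ is an edge and $|U|<|W|$, we likewise have $W_{|U|}=\lambda_0$. Now consider the edge $VW$, which has $V$ as its shorter endpoint, so condition~\ref{item:parallel} forbids any index $0\le j<|V|$ with $V_j=W_j=\lambda_0$. Taking $j=|U|$, which is strictly less than $|V|$ by assumption, we have just shown that $V_j=W_j=\lambda_0$, contradicting condition~\ref{item:parallel} for the edge $VW$.

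There is essentially no obstacle here: the statement is really just a sanity check that conditions~\ref{item:passing} and~\ref{item:parallel} interact correctly. The whole content is that a single position, namely $|U|$, would have to be simultaneously a passing coordinate of $\lambda_0$ for both $V$ and $W$ below the level $|V|$, and this is exactly what the parallel-ones criterion rules out. I would keep the written proof to the three-line argument above.
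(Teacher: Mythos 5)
Your proof is correct and is essentially the same as the paper's: both use the length ordering $|U|<|V|<|W|$, apply condition~\ref{item:passing} to the edges $UV$ and $UW$ to get $V_{|U|}=W_{|U|}=\lambda_0$, and then contradict condition~\ref{item:parallel} for the edge $VW$ at position $|U|$. Your added remark that the lengths must be pairwise distinct is a harmless extra justification of the paper's ``without loss of generality'' step.
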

\begin{proof}
Assume to the contrary that $U$, $V$ and $W$ form a triangle.
Without loss of generality we can assume that $|U|<|V|<|W|$.
Because there is an edge between $U$ and $V$, we know that $V_{|U|}=\lambda_0$.
Because there is an edge between $U$ and $W$, we know that $W_{|U|}=\lambda_0$.
This contradicts the existence of an edge between $V$ and $W$.
\end{proof}
The following follows directly from the definition of substitution:
\begin{observation}
\label{obs:preserve2}
Let $W$ be an infinite-parameter word. Then for every $U,\allowbreak V\in G$ it holds that $U$ is adjacent to $V$ if and only
if $W(U)$ is adjacent to $W(V)$.
\end{observation}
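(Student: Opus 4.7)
The plan is to derive the observation directly from the definition of substitution by a position-by-position analysis. Adjacency in Definition~\ref{def:trianglefree} depends, for a pair $(U,V)$ with $|U|<|V|$, on exactly two pieces of local data: the letter $V_{|U|}$, and the set of indices $j<|U|$ at which $U_j=V_j=\lambda_0$. Substitution by the infinite-parameter word $W$ transports both pieces of data faithfully, and making this precise is essentially all that is required.

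First I would dispose of the case $|U|=|V|$. Since the adjacency relation is defined only between vertices of distinct lengths and substitution preserves length (as verified in the next step), neither $U$ and $V$ nor $W(U)$ and $W(V)$ are adjacent, so the equivalence holds trivially. We may therefore assume $|U|<|V|$.

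Next, let $p_i$ denote the position of the first occurrence of $\lambda_i$ in $W$, so that $p_0<p_1<\cdots$ by the definition of a parameter word. By the definition of substitution, $|W(U)|=p_{|U|}$ and $|W(V)|=p_{|V|}$, and thus $|W(U)|<|W(V)|$. The letter of $W$ at position $p_{|U|}$ is $\lambda_{|U|}$; under the substitution of $V$ it is replaced by $V_{|U|}$, and since $p_{|U|}<p_{|V|}$ this position survives the truncation. Hence $W(V)_{|W(U)|}=V_{|U|}$, so condition~(i) of Definition~\ref{def:trianglefree} transfers verbatim between $(U,V)$ and $(W(U),W(V))$.

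For condition~(ii), fix $0\le j<p_{|U|}$ and inspect $W_j$. Since $\Alphabet=\{0\}$, either $W_j=0$, in which case $W(U)_j=W(V)_j=0\ne\lambda_0$; or $W_j=\lambda_i$ for some $i<|U|$ (the bound on $i$ follows from $j<p_{|U|}$), in which case $W(U)_j=U_i$ and $W(V)_j=V_i$, so $W(U)_j=W(V)_j=\lambda_0$ if and only if $U_i=V_i=\lambda_0$. Conversely, every $i<|U|$ is realised at position $p_i<p_{|U|}$ via $W_{p_i}=\lambda_i$. Hence the existence of a position $j$ witnessing the failure of condition~(ii) for $(W(U),W(V))$ is equivalent to the existence of an index $i<|U|$ witnessing the failure of condition~(ii) for $(U,V)$. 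This completes the equivalence. There is no substantive obstacle; the only care needed is to maintain the correspondence between parameter indices in $U,V$ and their positions in $W$.
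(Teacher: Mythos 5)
Your proof is correct and is precisely the argument the paper leaves implicit: the paper justifies Observation~\ref{obs:preserve2} only by the remark that it follows directly from the definition of substitution, and your position-by-position check (condition (i) via the letter $V_{|U|}$ landing at position $p_{|U|}=|W(U)|$, condition (ii) via the dichotomy that every $j<p_{|U|}$ has $W_j=0$ or $W_j=\lambda_i$ with $i<|U|$, together with the converse witnesses at the positions $p_i$) is exactly that routine verification written out. The only cosmetic slip is the phrase ``substitution preserves length'': it preserves equality and order of lengths (since $|W(U)|=p_{|U|}$ and the first occurrences $p_0<p_1<\cdots$ are strictly increasing), which is what your argument actually uses.
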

Let $\str{H}$ with $H=\omega$ be (an enumeration of) the universal homogeneous triangle-free graph.  We define the mapping $\varphi\colon\omega\to G$ by putting
$\varphi(i)=U$ where $U$ is a 1-parameter word of length $2i+1$ defined
by putting for every $0\leq j\leq i$ 
$$U_{2j}=\begin{cases}\lambda_0&\hbox{ if and only if $j=i$,} \\0 &\hbox{ otherwise,}\end{cases}$$
and for every $0\leq j'<i$ $$U_{2j'+1}=\begin{cases}\lambda_0&\hbox{ if and only if $\{j',i\}$ is an edge of $\str{H}$,} \\0 &\hbox{ otherwise.}\end{cases} $$
Notice that since $U_{2i}=\lambda_0$ it holds that $U$ is indeed an 1-parameter word.
It is easy to check:
\begin{observation}
The function $\varphi$ is an embedding $\varphi\colon \str{H}\to \str{G}$ and thus $\str{G}$ is a universal triangle-free graph.
\end{observation}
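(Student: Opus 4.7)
The plan is to prove separately that $\varphi$ is an embedding and that universality of $\str{G}$ follows as a consequence. The key observation driving the whole argument is that the triangle-free hypothesis on $\str{H}$ is exactly what is needed to discharge condition~\ref{item:parallel} of Definition~\ref{def:trianglefree} automatically.

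First I would verify that $\varphi$ takes values in $G$ and is injective. Well-definedness requires that $\varphi(i)$ actually contains the symbol $\lambda_0$, i.e., that $i$ has at least one neighbour in $\str{H}$ among $\{0,1,\ldots,i-1\}$. Since $\str{H}$ has no finite dominating set, we may WLOG fix the enumeration $H=\omega$ so that this holds for every $i\geq 1$ (the case $i=0$ gives the empty word and can be handled as a degenerate vertex or excluded by reindexing). Injectivity is then immediate because $|\varphi(i)|=i$, so the words $\varphi(i)$ have pairwise distinct lengths.

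Next I would verify edge preservation. Fix $i<j$ and set $U=\varphi(i)$, $V=\varphi(j)$, so $|U|=i$ and $|V|=j$. Condition~\ref{item:passing} of Definition~\ref{def:trianglefree} reads $V_i=\lambda_0$, which by construction of $\varphi$ is equivalent to $\{i,j\}$ being an edge of $\str{H}$. Condition~\ref{item:parallel} demands that no $p<i$ satisfy $U_p=V_p=\lambda_0$; unpacking the definition of $\varphi$, this says that $i$ and $j$ share no common neighbour in $\{0,\ldots,i-1\}$. If $\{i,j\}$ is an edge of $\str{H}$, then triangle-freeness forbids any common neighbour whatsoever, so condition~\ref{item:parallel} holds for free. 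Conversely, if $\{i,j\}$ is not an edge, condition~\ref{item:passing} already fails. Hence $\varphi(i)$ and $\varphi(j)$ are adjacent in $\str{G}$ exactly when $\{i,j\}$ is an edge of $\str{H}$, and $\varphi$ is an embedding.

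Universality of $\str{G}$ then follows formally: any countable triangle-free graph embeds into $\str{H}$ by universality of $\str{H}$, and post-composing with $\varphi$ produces an embedding into $\str{G}$. The substantive content lies entirely in the triangle-free step that makes condition~\ref{item:parallel} vacuous; the well-definedness of $\varphi$ on all of $\omega$ is a pedantic point worth flagging but not a genuine obstacle.
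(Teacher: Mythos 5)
Your verification is correct and is exactly the ``easy to check'' argument the paper leaves to the reader: adjacency of $\varphi(i)$ and $\varphi(j)$ reduces to condition~\ref{item:passing} of Definition~\ref{def:trianglefree}, since triangle-freeness of $\str{H}$ makes condition~\ref{item:parallel} automatic whenever $\{i,j\}$ is an edge, and condition~\ref{item:passing} fails whenever it is not. Two small repairs to your side remarks: an enumeration in which every $i\geq 1$ has a neighbour among $\{0,\dots,i-1\}$ exists because $\str{H}$ is connected (immediate from its extension property), not because it lacks a finite dominating set; and the vertex $0$ (whose image would be the empty word) is harmless because $\str{H}$ with a vertex deleted still has the extension property and is therefore isomorphic to $\str{H}$, which is all that universality of $\str{G}$ requires.
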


Now we prove Theorem~\ref{thm:trianglefree} in the following form:
\begin{theorem}
\label{thm:trianglefree2}
For every finite $k\geq 1$ and every finite colouring of induced subgraphs of $\str{G}$ with $k$ vertices there exists
	$f\in \Emb(\str{G},\str{G})$ such that the colour of every $k$-vertex subgraph $\str A$ of $f(\str{G})$ depends only on
$\tau(A)=\tau(f^{-1}[A])$. 
\end{theorem}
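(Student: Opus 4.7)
The plan is to iterate Theorem~\ref{thm:multCS} once for each embedding type of $k$-element subsets of $\str G$. By Corollary~\ref{cor:types} there are only finitely many such types; enumerate them as $\tau_1,\ldots,\tau_N$, and let $m_i$ denote the number of parameters of a minimal envelope realising $\tau_i$. The goal is to produce a single infinite-parameter word $W^*$ for which the map $f(U)=W^*(U)$ is an embedding $\str G\to\str G$ (guaranteed by Observation~\ref{obs:preserve2}) such that, for every $i$ and every $V\in\VSpace{\Alphabet}{\omega}{m_i}$, the colour $\chi(W^*(V(\tau_i)))$ depends only on $i$. Once this is achieved, any $k$-subset $A$ of $f(\str G)$ can be written as $A=W^*(V)(\tau_i)$ where $V$ is a minimal envelope of $T:=f^{-1}(A)$ with $\tau(T)=\tau_i$, and then both $\tau(A)=\tau(T)=\tau_i$ and the monochromaticity of $\chi(A)$ follow immediately.

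The construction is inductive. Start with $W^{(0)}=\lambda_0\lambda_1\lambda_2\cdots$, the identity infinite-parameter word (so that $W^{(0)}(U)=U$ for every $U\in G$). Given $W^{(i-1)}$, define a finite colouring of $\VSpace{\Alphabet}{\omega}{m_i}$ by
\[
  \chi_i(V) \;=\; \chi\!\left(W^{(i-1)}\!\left(V(\tau_i)\right)\right),
\]
and apply Theorem~\ref{thm:multCS} to obtain an infinite-parameter word $W_i$ on which $\chi_i$ is constant on $W_i\!\left(\VSpace{\Alphabet}{\omega}{m_i}\right)$. Set $W^{(i)}=W^{(i-1)}(W_i)$. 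By associativity of parameter-word substitution, $W^{(i)}(V)(\tau_i)=W^{(i-1)}\!\left(W_i(V)(\tau_i)\right)$, so $\chi(W^{(i)}(V)(\tau_i))$ is constant in $V$---the desired monochromaticity for type $\tau_i$. Crucially, the monochromaticity already secured at earlier steps $j<i$ survives the new substitution: for $V'\in\VSpace{\Alphabet}{\omega}{m_j}$ the word $W_i(V')$ again lies in $\VSpace{\Alphabet}{\omega}{m_j}$, and hence $\chi(W^{(i)}(V'(\tau_j)))=\chi(W^{(i-1)}(W_i(V')(\tau_j)))$ is still the constant from step $j$. After $N$ steps we take $W^*:=W^{(N)}$.

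What remains is a type-preservation check: if $V$ is a minimal envelope of $T\subseteq G$, then $W^*(V)$ is a minimal envelope of $W^*(T)$ realising the same embedding type, so $\tau(f(T))=\tau(T)$. This, together with the associativity identity $(W^{(i-1)}(W_i))(V)=W^{(i-1)}(W_i(V))$ silently used above, is the main obstacle---purely algebraic rather than Ramsey-theoretic. Both reduce to showing that substitution by an infinite-parameter word preserves the slice structure underlying the envelope construction of Proposition~\ref{prop:subspace}, so that neither the set of (in)compatibilities between slices nor the minimum number of parameters needed to describe them changes under substitution. Once these bookkeeping lemmas are in place, the induction runs mechanically and the entire Ramsey content of the argument is delivered by a single application of Theorem~\ref{thm:multCS} per embedding type.
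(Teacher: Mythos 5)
Your proposal is correct and is essentially the paper's own argument: enumerate the finitely many embedding types of $k$-element subsets given by Corollary~\ref{cor:types}, apply Theorem~\ref{thm:multCS} once per type to the pulled-back colouring $V\mapsto\chi(W^{(i-1)}(V(\tau_i)))$, and compose the resulting infinite-parameter words so that, by Observation~\ref{obs:preserve2}, the final substitution is the desired embedding. The only difference is expository: you spell out the associativity and closure of substitution and the type-preservation check, which the paper's proof uses silently.
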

Observe that by Corollary~\ref{cor:types}, we obtain the desired finite upper bound on number of colours.
The proof is again structured similarly to Milliken and Laver's results, see~\cite[Section 6.3]{todorcevic2010introduction}:
by a repeated application of Theorem~\ref{thm:multCS}, we obtain the desired copy.
\begin{proof}
Fix $k$, a finite set of colours $S$, and an $S$-colouring $\chi$ of
subsets of $G$ of size $k$.  Let $T^0,T^1,\allowbreak\ldots,\allowbreak T^{N-1}$
be all possible embedding types of subsets of $G$ of size $k$ in their minimal envelopes (given by
Corollary~\ref{cor:types}).
For every $0\leq i\leq {N-1}$, put $n_i=\max\{|U|\colon U\in T^i\}$.

Choose an infinite-parameter word $W^0\in \Space{\Alphabet}{\omega}{\omega}$ arbitrarily.
We construct a sequence of infinite-parameter words $W^1,W^2,\ldots,W^N$ such that for every $0< i\leq N$ the following is satisfied:
\begin{enumerate}
 \item $W^i=W^{i-1}(Z^i)$ for some infinite-parameter word $Z^i$,
 \item There exists colour $c^i$ such that $$\chi(W^i(U(T^{i-1})))=c^i$$ for every $U\in \VSpace{\Alphabet}{\omega}{n_{i-1}}$.
\end{enumerate}
	Let $f\colon G\to G$ be an (injective) function defined by putting $f(U)=W^N(U)$ for every $U\in G$.
By Observation~\ref{obs:preserve2}, $f$ preserves both edges and non-edges and consequently is an embedding.

	Now fix graph $\str{A}$ with $k$ vertices and its copy $\widetilde{\str{A}}$ in $f(\str{G})$. Then there exists $i$ such that $\widetilde{A}$ has embedding type $T^i$. By the construction of $f$ we know that the colour of $\widetilde{\str{A}}$ is $c^i$.
	Consequently, colours of subgraphs of $f(\str{G})$ with $k$ vertices depend only on their embedding types as desired.

It remains to show the construction of $W^i$.  Assume that $W^{i-1}$ is constructed.
Let $$\chi^i\colon \VSpace{\Alphabet}{\omega}{n_{i-1}}\to S$$ be a colouring
given by $$\chi^i(U)=\chi(W^{i-1}(U(T^{i-1}))).$$  By Theorem~\ref{thm:multCS} there exists an infinite-parameter word $Z^i$ and colour $c^i$
satisfying that $\chi^i(Z^i(U))=c^i$ for every $U\in \VSpace{\Alphabet}{\omega}{n_{i-1}}$. Put $W^i=W^{i-1}(Z^i)$.
\end{proof}

\subsection{Partial orders}
\label{sec:posets}
Throughout this section, we fix a language $L$ with a single binary relation $\leq$. A partial order $(A,\leq_\str{A})$ is then an $L$-structure $\str{A}$ with vertex set $A$ and a binary relation $\leq_\str{A}$.
We also fix the alphabet $\Alphabet=\{L,X,R\}$. We will use the lexicographic order of words that is based on an ``not-so-alphabetic'' order $<_{\mathrm{lxr}}$  of $\Alphabet$ defined by: $L<_{\mathrm{lxr}} X<_{\mathrm{lxr}}R$. We define the following binary relation $\preceq$ on $\Alphabet^*$:
\begin{definition}
\label{def:poset}
For $w,w'\in \Alphabet^*$ we put $w\prec w'$ if and only if there exists $0\leq i<\min(|w|,|w'|)$ such that
\begin{enumerate}[label=(\roman*)]
\item\label{cnd:1} $(w_i,w'_i)=(L,R)$ and
\item\label{cnd:2} for every $0\leq j< i$ it holds that $w_j\leq_{\mathrm{lxr}}w'_j$.
\end{enumerate}
For $w\prec w'$ we denote by $i(w,w')$ the minimal $i$ satisfying the condition \ref{cnd:1} above.
We put $w\preceq w'$ if and only if either $w=w'$ or $w\prec w'$.

We denote by $\str{O}$ the structure with vertex set $O=\Sigma^*$ ordered by $\preceq$.  (Thus we put $u \leq_\str{O} v$ if and only if $u \preceq v.$)
\end{definition}
\begin{remark}

The intuitive meaning of the definition above is that for every $w\in \Alphabet^*$ and every $j<|w|$ the letter $w_j$
describes a position of the vertex $w$ compared to vertices $u\in \Alphabet^*$ satisfying $|u|=j$.  If $w<u$ then it appears ``to the left'' and
is denoted by $L$.  If $u<w$ then it appears ``to the right'' and is denoted by $R$, and if $u$ and $w$
are incomparable we use $X$.  
\end{remark}
\begin{prop}
\label{prop:pos}
The structure $\str{O}$ is a partial order.
\end{prop}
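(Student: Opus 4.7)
The plan is to verify the three defining properties of a partial order in turn. Reflexivity is immediate from Definition~\ref{def:poset}, since $w \preceq w$ holds by the equality clause. So the content lies in antisymmetry and transitivity, and in both cases the argument is a small case analysis driven by the index $i(w,w')$ that witnesses $w \prec w'$.

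For antisymmetry, suppose $w \prec w'$ and $w' \prec w$; let $i = i(w,w')$ and $i' = i(w',w)$. By definition $(w_i,w'_i) = (L,R)$ and $(w'_{i'},w_{i'}) = (L,R)$. If $i = i'$ this forces $w_i = L = R$, a contradiction. If $i < i'$, then the fact that $w' \prec w$ forces $w'_j \leq_{\mathrm{lex}} w_j$ for every $j < i'$, and in particular at $j=i$ we get $R = w'_i \leq_{\mathrm{lex}} w_i = L$, contradicting $L <_{\mathrm{lex}} R$. The case $i > i'$ is symmetric. Hence $w = w'$.

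For transitivity, suppose $w \prec w'$ and $w' \prec w''$; set $i = i(w,w')$ and $j = i(w',w'')$. I will show that $\min(i,j)$ itself is a witness for $w \prec w''$. First, $\min(i,j) < \min(|w|,|w''|)$, since $i < |w|$, $j < |w''|$, and the two indices are less than $|w'|$. If $i \leq j$, then at position $i$ we have $w_i = L$ and $w'_i = R$, and since $i \leq j$ the condition on $w' \prec w''$ gives $R = w'_i \leq_{\mathrm{lex}} w''_i$, forcing $w''_i = R$; moreover, for $k < i$ both $w_k \leq_{\mathrm{lex}} w'_k$ and $w'_k \leq_{\mathrm{lex}} w''_k$, so $w_k \leq_{\mathrm{lex}} w''_k$, and $i$ witnesses $w \prec w''$. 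If $j < i$, then at position $j$ the condition on $w \prec w'$ gives $w_j \leq_{\mathrm{lex}} w'_j = L$, hence $w_j = L$, while $w''_j = R$, and the same chaining of inequalities for $k < j$ shows $w_k \leq_{\mathrm{lex}} w''_k$, so $j$ witnesses $w \prec w''$.

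I do not expect any serious obstacle; the only mildly delicate point is the transitivity step, where one must take the \emph{smaller} of the two witnessing indices and carefully verify both clauses of Definition~\ref{def:poset} using the monotonicity condition \emph{at positions before that index in both sources}. Once this is done cleanly, the proof is complete.
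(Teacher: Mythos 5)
Your proof is correct and follows essentially the same route as the paper: reflexivity and antisymmetry are quick checks, and transitivity is handled by taking the smaller of the two witnessing indices $i(w,w')$, $i(w',w'')$ and chaining the lexicographic inequalities at earlier positions (the paper leaves antisymmetry as an easy remark, which you spell out). The only cosmetic point is the boundary subcase $i=j$ in transitivity, where clause (ii) of $w'\prec w''$ does not literally apply at position $i$; there one invokes clause (i) directly (which gives $w''_i=R$ anyway, and in fact shows $i=j$ cannot occur), matching the same terseness as the paper's own argument.
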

\begin{proof}
It is easy to see that $\preceq$ is reflexive and anti-symmetric.  We verify transitivity. Let $w\prec w'\prec w''$
 and put $i=\min(i(w,w'),i(w',w''))$. 

First assume that $i=i(w,w')$.  Then we have $w_i=L, w'_i=R$ which implies that $w''_i=R$.
For every $0\leq j<i$ it holds that $w_j\leq_{\mathrm{lxr}}w'_j\leq_{\mathrm{lxr}}w''_j$.
It follows that $w\preceq w''$ and $i(w,w'')\leq i$.

Now assume that $i=i(w',w'')$. Then we have $w'_i=L$, $w''_i=R$ and because $w'_i=L$ then also $w_i=L$.
Again for every $0\leq j<i$ it holds that $w_j\leq_{\mathrm{lxr}}w'_j\leq_{\mathrm{lxr}}w''_j$.
It also follows that $w\preceq w''$ and $i(w,w'')\leq i$.
\end{proof}
The key to our construction is the following:
\begin{lemma}
\label{lem:preserve1}
Let $W$ be an infinite-parameter word. Then for every $w,w'\in \Alphabet^*$ it holds that $w\preceq w'$ if and only
if $W(w)\preceq W(w')$.
\end{lemma}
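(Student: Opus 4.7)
The plan is to exploit the fact that substitution by $W$ is coordinatewise: at a position where $W$ has a constant letter from $\Alphabet$, both $W(w)$ and $W(w')$ inherit that constant, and at a position where $W$ has $\lambda_j$ (provided it survives the truncation before the first occurrence of $\lambda_{|w|}$ or $\lambda_{|w'|}$), the resulting strings have $w_j$ and $w'_j$ respectively. Thus a $\prec$-comparison between $W(w)$ and $W(w')$ should translate directly into a $\prec$-comparison between $w$ and $w'$, once we carefully align positions in $W$ with indices into $w$ and $w'$. The equality case $w=w'$ gives $W(w)=W(w')$ immediately, so I would focus on the strict case.

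For the forward direction, assume $w\prec w'$ with $i=i(w,w')$, and let $p$ be the position of the first occurrence of $\lambda_i$ in $W$. Because $i<|w|$ and $i<|w'|$, the truncation rule leaves $p$ inside both $W(w)$ and $W(w')$, and $(W(w)_p,W(w')_p)=(L,R)$. For any earlier position $q<p$ in $W$, either $W_q\in\Alphabet$, in which case $W(w)_q=W(w')_q$, or $W_q=\lambda_k$ for some $k$; by the definition of a parameter word, first occurrences of $\lambda_0,\lambda_1,\ldots$ are strictly increasing in position, so this $k$ must satisfy $k<i$, and the witness hypothesis on $i$ gives $w_k\leq_{\mathrm{lex}} w'_k$. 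Hence $W(w)\prec W(w')$ with witness $p$.

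For the reverse direction, assume $W(w)\prec W(w')$ with witness $i^*$. Since $W(w)_{i^*}=L\neq R=W(w')_{i^*}$, the letter $W_{i^*}$ cannot be a constant, so $W_{i^*}=\lambda_j$ for some $j$ with $j<\min(|w|,|w'|)$ (else truncation would have removed position $i^*$), giving $w_j=L$ and $w'_j=R$. To certify $w\prec w'$ via index $j$, it remains to check $w_k\leq_{\mathrm{lex}} w'_k$ for every $k<j$. Here the only subtlety is that $i^*$ need not be the \emph{first} occurrence of $\lambda_j$ in $W$, since parameters may repeat; however, the parameter-word axiom forces the first occurrence of every $\lambda_k$ with $k<j$ to appear strictly before the first occurrence of $\lambda_j$, which in turn is at most $i^*$. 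Calling this first-occurrence position $q<i^*$, we have $W(w)_q=w_k$ and $W(w')_q=w'_k$, and the witness hypothesis on $i^*$ delivers the required $w_k\leq_{\mathrm{lex}} w'_k$. This handling of possibly repeated parameters is the main technical point; it is precisely where the ``first occurrences strictly increase'' clause in the definition of a parameter word is used.
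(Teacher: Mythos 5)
Your argument is correct and is precisely a fleshed-out version of the paper's own (one-line) proof, which just observes that everything follows from the fact that the first occurrences of $\lambda_0,\lambda_1,\ldots$ in $W$ appear in increasing positions — the same fact you invoke in both directions. The only point you leave implicit is the degenerate case $W(w)=W(w')$ of the backward direction, which needs the (equally immediate) injectivity of $w\mapsto W(w)$: distinct words either force different truncation lengths or differ at the first occurrence of some $\lambda_j$, so $W(w)\preceq W(w')$ with equality indeed gives $w=w'$.
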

\begin{proof}
This can be easily checked using the fact that for every $i>0$, $\lambda_i$ first occurs in $W$ after the first occurrence of $\lambda_{i-1}$.
\end{proof}

Recall that by $\str{P}=(P,\leq_\str{P})$ we denote the universal homogeneous partial order.  Without loss of generality, we can assume that $P=\omega$
and thus fix an (arbitrary) enumeration of $\str{P}$.
We define function $\varphi\colon \omega\to \Alphabet^*$ by mapping $j\in P$ to a word $w$ of length $2j+2$ defined as:
$$
(w_{2i},w_{2i+1})=
\begin{cases}
(L,L) & \hbox {whenever $i<j$ and $j\leq_\str{P} i$,}\\
(R,R) & \hbox {whenever $i<j$ and $i\leq_\str{P} j$,}\\
(X,X) & \hbox {whenever $i<j$ and $i$ is $\leq_\str{P}$-incomparable with $j$,}\\
(L,R) & \hbox {whenever $i=j$.}
\end{cases}
$$
\begin{prop} \label{prop:universalposet}
The function $\varphi$ is an embedding $\varphi\colon \str{P}\to \str{O}$.
Consequently, $\str{O}$ is a universal partial order.
\end{prop}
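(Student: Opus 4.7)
The plan is to verify two things: that $\varphi$ is injective and that it preserves the partial-order relation in both directions; universality of $\str{O}$ then follows from that of $\str{P}$ by composing embeddings (every countable partial order embeds into $\str{P}$, hence into $\str{O}$ via $\varphi$).

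Injectivity is immediate, since $|\varphi(j)|$ is determined by $j$. For the relation-preserving property, I would fix $j_1 \neq j_2$, assume without loss of generality $j_1 < j_2$ as natural numbers, and split on the $\str{P}$-relationship between $j_1$ and $j_2$. If $j_1 \leq_\str{P} j_2$, I would exhibit an explicit witness to $\varphi(j_1) \prec \varphi(j_2)$: the block of indices where $\varphi(j_1)$ encodes its own position (pair $(L,R)$) coincides with the block in $\varphi(j_2)$ encoding ``$i = j_1$ with $j_1 \leq_\str{P} j_2$'' (pair $(R,R)$), so this block supplies an index with value $L$ in $\varphi(j_1)$ and $R$ in $\varphi(j_2)$. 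The lex-monotonicity requirement on earlier positions then reduces to a per-index case analysis over $i < j_1$ comparing the encoding of $i$ vs.\ $j_1$ with the encoding of $i$ vs.\ $j_2$; transitivity of $\leq_\str{P}$ together with $j_1 \leq_\str{P} j_2$ rules out each bad combination (for instance, $i \leq_\str{P} j_1$ forces $i \leq_\str{P} j_2$, so the pair $(R,R)$ of $\varphi(j_1)$ is matched by $(R,R)$ in $\varphi(j_2)$). The symmetric argument handles $j_2 \leq_\str{P} j_1$, using the adjacent block position that distinguishes $(L,R)$ from $(L,L)$.

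The delicate converse direction is to show that $\varphi(j_1) \preceq \varphi(j_2)$ forces $j_1 \leq_\str{P} j_2$. Suppose a witnessing index $k$ exists with $w_k = L$ and $w'_k = R$, where $w = \varphi(j_1)$ and $w' = \varphi(j_2)$. Decoding the encoding rule, $w_k = L$ forces $j_1 \leq_\str{P} \lfloor k/2 \rfloor$, while $w'_k = R$ forces $\lfloor k/2 \rfloor \leq_\str{P} j_2$, and transitivity then yields $j_1 \leq_\str{P} j_2$. I expect the bookkeeping in this decoding step (splitting on parity of $k$ and on whether $\lfloor k/2 \rfloor$ hits a boundary value equal to $j_1$ or $j_2$) to be the main technical obstacle, but in every sub-case the pair appearing at index $\lfloor k/2 \rfloor$ in $w$ (resp.\ $w'$) reduces to the same transitivity conclusion. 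This establishes both directions of relation preservation, completes the verification that $\varphi$ is an embedding, and hence establishes universality of $\str{O}$.
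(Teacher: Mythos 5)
Your proposal is correct and follows essentially the same route as the paper: the same explicit witness at the block where $\varphi(j_1)$ encodes its own position, the same per-block transitivity check for the lex-monotonicity condition, and the same decoding of a witnessing index (the paper phrases the converse as its case (3), preservation of incomparability, using exactly your $j_1\leq_\str{P}\lfloor k/2\rfloor\leq_\str{P} j_2$ argument). The only difference is organizational: you prove the converse directly for all pairs rather than via the trichotomy of cases, which is a negligible variation.
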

\begin{proof}
Given $i<j\in \omega$, put $u=\varphi(i)$ and $v=\varphi(j)$ and consider three cases:
\begin{enumerate}
\item $i\leq_\str{P} j\implies u\preceq v$:
We have $u_{2i}=L$ and $v_{2i}=R$ and we check that for every $0\leq k< i$ it holds that $u_{2k}\leq_{\mathrm{lxr}} v_{2k}$ and thus also $u_{2k+1}\leq_{\mathrm{lxr}} v_{2k+1}$.
If $u_{2k}=L$ then this follows trivially. If $u_{2k}=X$ then we know that $k$ is incomparable with $i$ by $\leq_\str{P}$. It follows that $v_{2k}\neq L$ because $i\leq_\str{P} j$ and thus it can not hold that $j\leq_\str{P} k$.
If $u_{2k}=R$ then we get $k\leq_\str{P} i\leq_\str{P} j$ and thus also $v_{2k}=R$.
\item $j\leq_\str{P} i\implies v\preceq i$:
Here we have $u_{2i+1}=R$ and $v_{2i+1}=L$.
Analogously as in the previous case, we can check that for every $0\leq k< i$ it holds that $v_{2k}\leq_{\mathrm{lxr}} u_{2k}$.
\item If $i$ is incomparable with $j$ in $\leq_\str{P}$ then $u$ is incomparable with $v$ in $\preceq$:
Assume the contrary and let $k\leq i$ be such that either $u_{2k}=L$ and $v_{2k}=R$ or $u_{2k+1}=L$ and $v_{2k+1}=R$.  Clearly $k<i$ because $v_{2i}=v_{2i+1}=X$.
We get that $i\leq_\str{P} k\leq_\str{P} j$ or $j\leq_\str{P} k\leq_\str{P} i$. A contradiction.
\end{enumerate}
\end{proof}
\begin{remark}
Easy constructions of universal partial orders are interesting in their own right, see~\cite{hedrlin1969universal,pultr1980combinatorial,Hubicka2005,Hubicka2005a,hubicka2011some}.
	Observe also that the lexicographic order $\leq_{\mathrm{lxr}}$ is a linear extension of $\preceq$ and thus the construction can be seen as a direct refinement
of the Laver--Devlin construction.
\end{remark}
Now we are ready to prove Theorem~\ref{thm:posets} in the following form.
\begin{theorem}
\label{thm:posets2}
For every finite $k\geq 1$ and every finite colouring of (induced) suborders of $\str{O}$ with $k$ elements, there exists
	$f\in \Emb(\str{O},\str{O})$ such that the colour of every suborder $\str{A}$ of $f(\str{O})$ with $k$ vertices depends only on
$\tau(A)=\tau(f^{-1}[A])$. 
\end{theorem}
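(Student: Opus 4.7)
The plan is to follow the proof of Theorem~\ref{thm:trianglefree2} almost verbatim, with the alphabet $\Alphabet=\{L,X,R\}$ and with Lemma~\ref{lem:preserve1} in place of Observation~\ref{obs:preserve2}. The key invariance is that for any infinite-parameter word $W$ and any $u,v\in \Alphabet^*$ one has $u\preceq v$ iff $W(u)\preceq W(v)$, so substitution by an infinite-parameter word yields an embedding $\str{O}\to \str{O}$ whose image's order structure on any subset is read off from the preimage alone.

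First I would enumerate $T^0,\ldots,T^{N-1}$, the finitely many (by Corollary~\ref{cor:types}) embedding types of $k$-element subsets $S\subseteq \Alphabet^*=\VSpace{\Alphabet}{\omega}{0}$ taken in their minimal envelopes, and set $n_i=\max\{|U|\colon U\in T^i\}$. Starting from an arbitrary $W^0\in \Space{\Alphabet}{\omega}{\omega}$, I would then build infinite-parameter words $W^1,\ldots,W^N$ inductively so that $W^i=W^{i-1}(Z^i)$ for some infinite-parameter $Z^i$, and so that $\chi(W^i(U(T^{i-1})))$ is a constant $c^i$ for every $U\in \VSpace{\Alphabet}{\omega}{n_{i-1}}$. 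To perform the inductive step, colour $U\in \VSpace{\Alphabet}{\omega}{n_{i-1}}$ by $\chi^i(U)=\chi(W^{i-1}(U(T^{i-1})))$, apply Theorem~\ref{thm:multCS} to obtain an infinite-parameter $Z^i$ on which $\chi^i\circ Z^i$ is constant, and set $W^i=W^{i-1}(Z^i)$.

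Finally, define $f(u)=W^N(u)$. By Lemma~\ref{lem:preserve1} this is an embedding $\str{O}\to\str{O}$, and for every $k$-element suborder $\str{A}\subseteq f(\str{O})$, the preimage $f^{-1}[A]$ has some embedding type $T^i$, so by construction $\chi(A)=c^{i+1}$, depending only on $\tau([A])=\tau(f^{-1}[A])$, as required. Combined with Corollary~\ref{cor:types}, this yields the finite bound on the big Ramsey degree of any $k$-element substructure.

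The argument is essentially identical to the triangle-free case, so the main obstacle is not in this theorem itself but in the set-up preceding it: choosing the universal partial order $\str{O}$ and proving the substitution-invariance of $\preceq$ (Lemma~\ref{lem:preserve1}), both of which are already in hand. The one remaining bookkeeping point is that iterated substitution must preserve the colours fixed in earlier stages; this follows from associativity of substitution, since $W^j(U(T^i))=W^{i+1}(Z^{i+2}(\cdots Z^j(U)\cdots)(T^i))$ with the inner word still belonging to $\VSpace{\Alphabet}{\omega}{n_i}$, so the step-$(i+1)$ monochromaticity continues to apply.
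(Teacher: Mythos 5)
Your proposal is correct and follows essentially the same route as the paper, which proves Theorem~\ref{thm:posets2} by repeating the proof of Theorem~\ref{thm:trianglefree2} verbatim with alphabet $\{L,X,R\}$, iterated applications of Theorem~\ref{thm:multCS}, and Lemma~\ref{lem:preserve1} in place of Observation~\ref{obs:preserve2}. Your closing remark about associativity of substitution makes explicit a bookkeeping point the paper leaves implicit, but it is the same argument.
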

\begin{proof}
This follows in an analogy to Theorem~\ref{thm:trianglefree2}.

Fix $k$ and a finite colouring $\chi$ of
subsets of $O$ of size $k$.  Let $T^0,T^1,\allowbreak\ldots,\allowbreak T^{N-1}$
be all possible embedding types of subsets of $O$ of size $k$ in their minimal envelopes (given by
Corollary~\ref{cor:types}).
For every $0\leq i\leq {N-1}$ put $n_i=\max\{|U|\colon U\in T^i\}$.

Choose infinite-parameter word $W^0\in \Space{\Alphabet}{\omega}{\omega}$ arbitrarily.
We construct a sequence of infinite-parameter words $W^1,W^2,\ldots,W^N$ such that for every $0< i\leq N$ the following is satisfied:
\begin{enumerate}
 \item $W^i=W^{i-1}(Z^i)$ for some infinite-parameter word $Z^i$,
 \item There exists colour $c^i$ such that $$\chi(W^i(U(T^{i-1})))=c^i$$ for every $U\in \VSpace{\Alphabet}{\omega}{n_{i-1}}$.
\end{enumerate}
Let $f$ be defined by $f(U)=W^N(U)$.
By Lemma~\ref{lem:preserve1} we know that this is an embedding
with the desired properties.

Word $W^i$ is again constructed by an application of Theorem~\ref{thm:multCS}.
\end{proof}
\section{Ramsey classes (of finite structures)}\label{sec:smallclasses}
While determining exact big Ramsey degrees is technically challenging, we can use
the techniques from this paper to determine exact small Ramsey degrees. In this section, we discuss in detail
how exact small Ramsey degrees can be obtained for both triangle-free graphs (where we obtain a new proof of this old result of Nešetřil and Rödl~\cite{Nevsetvril1977,Nevsetvril1989}) and
partial orders (where we systematically arrive to the proof strategy identified by Ma{\v{s}}ulovi{\'c} in 2018~\cite{masulovic2016pre}). This provides an explicit link between the techniques for giving bounds on small and big Ramsey degrees,
problems previously studied independently, and also a method that can be applied to other structures as well (for example to metric spaces~\cite{balko2021big}).

The exact big Ramsey degrees require more effort, see Section~\ref{sec:concluding}, and are determined in follow-up papers~\cite{Balko2021exact,Balko2023}. Curiously, these papers show that triangle-free graphs differ from partial orders in a subtle yet important way. While big Ramsey degrees of triangle-free graphs can be obtained by analysing the proof given here, for a precise characterisation of big Ramsey degrees of partial orders, a refinement of parameter spaces needed to be introduced.  The issue is demonstrated in Section~\ref{finitepartial} as the asymmetry between the use of letters $L$ and $R$ in the construction.

\subsection{Ordered triangle-free graphs}
An \emph{ordered graph} is a relational structure $\str{A}$ in a language consisting of two binary relations $E$ and $\leq$
such that $(A,E_\str{A})$ is a graph and $(A,\leq_\str{A})$ is a linear order.

We prove a special case of the Ne\v set\v ril--R\" odl theorem~\cite{Nevsetvril1977,Nevsetvril1989}.
Our proof is based on the ideas developed in the previous sections and is arguably the most direct proof
of this result known to date, giving a particularly simple description of the Ramsey graph $\str{C}$.
We shall remark that similar constructions have been known for unrestricted classes, see~\cite[Theorem 12.13]{PromelBook}
for a proof of the Ramsey property of the class of all finite ordered graphs.
However, to our best knowledge, a similar strategy has been applied to a class of graphs with a forbidden subgraph
in special cases only (for colouring vertices and edges~\cite{nevsetril1975ramsey,nesetril1975type}).
\begin{theorem}[Special case of Ne\v set\v ril--R\"odl~\cite{Nevsetvril1977,Nevsetvril1989}]
\label{thm:fingraphs}
For every integer $r>0$ and every pair of finite ordered triangle-free graphs $\str{A}$ and $\str{B}$, there exists a finite ordered
triangle-free graph $\str{C}$ such that $\str{C}\longrightarrow (\str{B})^\str{A}_{r,1}$.
\end{theorem}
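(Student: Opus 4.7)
My plan is to imitate the proof of Theorem~\ref{thm:trianglefree2}, working inside a finite truncation of the parameter-word graph $\str{G}$ and using the finite Graham--Rothschild theorem (Theorem~\ref{thm:multCSfin}) in place of the infinite Theorem~\ref{thm:multCS}. Fix $\Alphabet=\{0\}$ and, for an integer $N$ to be specified, define $\str{C}$ as the induced subgraph of $\str{G}$ on the vertex set $\VSpace{\Alphabet}{N}{1}$, equipped with the linear order $U<_\str{C}V$ iff $|U|<|V|$, or $|U|=|V|$ and $U<_\mathrm{lex}V$ (using $0<_\mathrm{lex}\lambda_0$). Then $\str{C}$ is a finite ordered triangle-free graph; a length-preserving analog of the encoding $\varphi$ from Section~\ref{sec:bigdegrees} embeds $\str{B}$ into $\str{C}$ as soon as $N$ exceeds the maximal vertex-word length used by that encoding.

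Given an $r$-colouring $\chi$ of ${\str{C}\choose\str{A}}$, I would enumerate the finitely many embedding types $T^0,\ldots,T^{t-1}$ of $|A|$-element subsets of $\VSpace{\Alphabet}{\omega}{1}$ (finite by Corollary~\ref{cor:types}) and iterate Theorem~\ref{thm:multCSfin} once per type, in direct analogy with the proof of Theorem~\ref{thm:trianglefree2}. Taking $N$ to be the $t$-fold iterated composition of the Ramsey function from Theorem~\ref{thm:multCSfin} (whose inputs are determined by the maximal word lengths of the $T^i$ together with a final dimension large enough to host a $\varphi$-style copy of $\str{B}$), I obtain a sub-parameter-word $W$ whose image contains a copy $\widetilde{\str{B}}$ of $\str{B}$ such that the $\chi$-colour of every $\str{A}$-subcopy of $\widetilde{\str{B}}$ depends only on its parameter-word embedding type.

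The main obstacle is passing from type-dependent to monochromatic: within a generic copy of $\str{B}$ inside $\str{C}$, distinct $\str{A}$-subcopies can have distinct parameter-word embedding types (for instance, two non-adjacent vertices realised by $\lambda_0$ and $00\lambda_0$ have a different embedding type than those realised by $\lambda_0 0$ and $0\lambda_0 00$, yet both configurations can coexist in a single copy of $\str{B}$). I would attack this with an additional outer round of Ramsey reduction that also homogenises over the finitely many embedding types of $|B|$-element sets: since the multiset of $\str{A}$-subcopy types occurring inside a $\str{B}$-set is determined by the outer $\str{B}$-type (the underlying substitution respects Corollary~\ref{cor:types}), applying Theorem~\ref{thm:multCSfin} once more to the $\chi$-induced colouring on $|B|$-element sets collapses the $\chi$-colours of all $\str{A}$-subcopies of a $\str{B}$-copy of a fixed outer type to a common value. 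Placing the canonical $\varphi$-encoded copy of $\str{B}$ inside the resulting monochromatic subspace then forces every $\str{A}$-subcopy to share this common $\chi$-colour, yielding $\str{C}\longrightarrow(\str{B})^\str{A}_{r,1}$.
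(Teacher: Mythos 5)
There is a genuine gap, and it sits exactly where you located the ``main obstacle.'' Your inner rounds, imitating Theorem~\ref{thm:trianglefree2}, only yield a copy in which the colour of an $\str{A}$-subcopy depends on its embedding type; that is a finite-degree bound, not degree one. The proposed repair --- one more application of Theorem~\ref{thm:multCSfin} to an induced colouring of $|B|$-element sets --- cannot close this. Homogenising over the types of $|B|$-sets only makes whatever you colour the $B$-sets with depend on the outer type; it has no power to identify the (already fixed, adversarial) colours assigned to two \emph{distinct} $\str{A}$-types that both occur inside every copy of $\str{B}$ of a given outer type. If such an ``outer round'' could collapse the colours across $\str{A}$-types, the same argument run inside $\VSpace{\Alphabet}{\omega}{1}$ would show that every finite triangle-free graph has big Ramsey degree one in the Henson graph, contradicting known lower bounds (e.g.\ the big Ramsey degree of an edge is four, as cited in the concluding remarks). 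So the final step, ``placing the canonical $\varphi$-encoded copy of $\str{B}$ inside the monochromatic subspace forces every $\str{A}$-subcopy to share the common colour,'' is unjustified with the length-preserving passing-number encoding you use: as your own example shows, that copy of $\str{B}$ contains $\str{A}$-subcopies of several different embedding types.

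The missing idea in the paper is structural rather than an extra Ramsey application: one chooses a \emph{special} embedding $\varphi\colon\str{B}\to\str{G}_n$ whose words carry a prefix of length $d$ recording the values of all Kat\v etov functions of $\str{B}$ (triangle-free one-vertex extensions), enumerated lexicographically, followed by the passing-number part; $\str{A}$ is encoded by the analogous $\varphi'$. Claim~\ref{clm2} then shows that \emph{every} copy of $\str{A}$ inside $\varphi(\str{B})$ is of the form $W(\varphi'(A))$ for a $k$-parameter word $W\in\VSpace{\Alphabet}{n}{k}$ --- the Kat\v etov prefix supplies exactly the branching needed to realise each subcopy as a substitution image of the single canonical copy of $\str{A}$, and restricting to Kat\v etov functions is what keeps the construction triangle-free. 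With this, only one ``type'' occurs, so a single application of Theorem~\ref{thm:multCSfin} (colouring $\VSpace{\Alphabet}{N}{k}$ via $W\mapsto\chi(W(\varphi'(A)))$ and taking $\widetilde{W}\in\VSpace{\Alphabet}{N}{n}$ monochromatic) makes $\widetilde{W}(\varphi(B))$ genuinely monochromatic. Without an analogue of Claim~\ref{clm2}, or some other mechanism forcing all $\str{A}$-subcopies of the chosen $\str{B}$-copy into one substitution orbit, your argument proves only a finite small Ramsey degree, not the Ne\v set\v ril--R\"odl statement with $l=1$.
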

\begin{proof}
We fix alphabet $\Alphabet=\{0\}$.
Recall the graph $\str{G}$ defined in Definition~\ref{def:trianglefree}.
By $\str{G}_N$, we denote the ordered graph created from $\str{G}$ by considering only vertices in $\VSpace{\Alphabet}{N}{1}$ 
and adding a lexicographic ordering of the vertices (where we consider vertices to be strings in alphabet $\{0,\lambda_0\}$ ordered $0<\lambda_0$).

We will show that for sufficiently large $N$ (to be specified at the end of the proof) it holds that
$\str{G}_N\longrightarrow (\str{B})^{\str{A}}_{r,1}$.
Towards this, we first define a more careful way to embed an ordered triangle-free graph $\str{B}$ into $\str{G}_n$ for some $n$ to be determined.
\medskip

Let $\str{B}$ be an ordered triangle-free graph.
 For simplicity we can assume
that $B=\{0,1,\ldots,|B|-1\}$ and that $\leq_\str{B}$ coincides with the order of integers.  
We define an embedding $\varphi\colon \str{B}\to\str{G}_n$ for some
sufficiently large $n$ to be fixed later by the following procedure.  We say
that a function $f\colon B\to\{0,\lambda_0\}$ is a \emph{1-type} (extending $\str{B}$) if $\str{B}$ extended
by a new vertex which is adjacent precisely to those vertices $v\in B$ satisfying
$f(v)=\lambda_0$ is a triangle-free graph. In other words, there are no two adjacent vertices $v,v'\in B$ such that $f(v)=f(v')=\lambda_0$.

Now enumerate all possible 1-types as $f_0,\allowbreak f_1,\allowbreak\ldots,\allowbreak f_{d-1}$ ordered
lexicographically with respect to $\leq_\str{B}$.   More precisely, we see every function $f_i$ as a word $w^i$ of length $|B|$ with $w^i_j=f_i(j)$
and order those words lexicographically.

Put $\varphi(v)=V$ where word $V$ is defined as follows: $|V|=d+v$ and 
$$
V_j=
\begin{cases}
f_j(v) & \hbox{for $j< d$},\\
\lambda_0 & \hbox {for $d\leq j<d+v$ such that $v$ is adjacent to $j-d$ in $\str{B}$},\\
0 & \hbox {for $d\leq j<d+v$ such that $v$ is not adjacent to $j-d$ in $\str{B}$}.
\end{cases}
$$

Now put $n=d+|B|$. It is easy to see that $\varphi$ is an embedding of $\str{B}$ to $\str{G}_n$ (to see that the order is preserved, note that all extensions by a vertex connected to precisely one vertex of $\str B$ are triangle-free).
\begin{figure}
\centering
\includegraphics{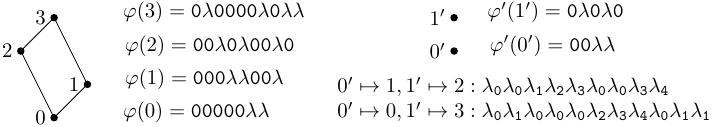}
\caption{Representation of a graph $\str{B}$ ordered naturally $0\leq_\str{B}1\leq_\str{B}2\leq_\str{B}3$
and a graph $\str{A}$ ordered $0'\leq_\str{A} 1'$ along with a parameter word representing all embeddings of $\str{A}$ to $\str{B}$ as constructed in the proof of Claim~\ref{clm2}. For easier reading, $\lambda_0$ is typeset as $\mathrm{\lambda}$.}
\label{fig:trianglefree}
\end{figure}
An example of this representation is depicted in Figure~\ref{fig:trianglefree}.

\medskip

Let $\varphi'$ be an embedding of $\str{A} \to \str{G}_k$ for some $k>0$ constructed in the same way as above. 
With $n$ and $k$ fixed we show:

\begin{claim}
\label{clm2}
	Given $\theta \in \Emb(\str{A},\str{B})$ denote by $\widetilde{\str{A}}$ the copy $\theta(\str{A})$.
	Then there exists a $k$-parameter word $W\in \VSpace{\Alphabet}{n}{k}$ such that $W(\varphi'(A))=\varphi(\widetilde{A})$.
\end{claim}
Let $f_0,f_1,\ldots, f_{d-1}$ be the enumeration of 1-types of $\str{B}$ in the lexicographic
order and let $f'_0,f'_1,\ldots, f'_{d'-1}$ be the enumeration of 1-types of $\widetilde{\str{A}}$ also ordered lexicographically.
Let $h\colon \{0,1,\ldots,d-1\}\to\{0,1,\ldots,d'-1\}$ be the mapping such that for every $i\in \{0,1,\ldots,d-1\}$ function $f_i$ restricted to $\widetilde{A}$ is $f'_{h(i)}$.
Observe that every 1-type $f$ of
$\widetilde{\str{A}}$ can be extended to a 1-type $f'$ of
$\str{B}$ by putting $f'=f(v)$ for $v\in \widetilde{A}$ and $f'(v)=0$
otherwise.
It follows that $h$ exists and is surjective. 

For every $v\in B\setminus \widetilde{A}$ we put $e(v)$ to be an integer
such that $f'_{e(v)}$ describes the neighbourhood of $v$ in $\widetilde{A}$.

We now define a string $W$ (which we later verify to be a parameter word) of length $d+\max({\widetilde{A}})$ as follows:
$$
W_j=
\begin{cases}
\lambda_{h(j)} & \hbox{for every $0\leq j< d$,}\\
\lambda_{d'+\theta^{-1}(j-d)} & \hbox{for every $d\leq j$ such that $j-d\in \widetilde{A}$,}\\
\lambda_{e(j-d)} & \hbox{for every $d\leq j$ such that $j-d\notin \widetilde{A}$}.
\end{cases}
$$
First observe that $W_0=\lambda_0$.  This is because $f_0$ and $f'_0$ are both constant zero functions.

We verify that $W$ is a $k$-parameter word, that is,
for every $1\leq j<k$ it holds that the first occurrence of $\lambda_j$ comes after the first occurrence of $\lambda_{j-1}$. We consider three cases:
\begin{enumerate}
\item
$j< d'$: Function $f'_j$ can be extended to function $f''_j\colon B\to \{0,\lambda_0\}$ by putting $f''_j(v)=0$ for every $v\notin \widetilde{A}$.
This is clearly a 1-type of $\str{B}$ and therefore there exists $j'$ such that $f''_j=f_{j'}$.
From this it follows that $W_{j'}=\lambda_j$. Because zero is the minimal element of the alphabet we get that this is also the first occurrence of $\lambda_j$ in $W$. Finally,
because the first occurrence of $\lambda_{j-1}$ can be found same way and 
the extension by zeros preserves the relative lexicographic order, we know that $\lambda_j$ appears after $\lambda_{j-1}$.
\item $j=d'$:
$\lambda_j$ occurs once at position $d+\theta(j-d')=d+\theta(0)$. We already checked that $\lambda_{j-1}$ occurs before $d$.
\item $d'< j<k$:
For every $d'< j< k$ it holds that $\lambda_j$ occurs precisely once at position $d+\theta(j-d')$
so the desired ordering follows from the monotonicity of $\theta$.
\end{enumerate}
This finishes the proof that $W$ is indeed $k$-parameter word.
By substituting $\varphi'(A)$ into $W$ it can be also checked that $W(\varphi'(A))=\varphi(\widetilde{A})$. This finishes the proof of Claim~\ref{clm2}.

\medskip

Now let $N=N(1,k,n,r)$ be given by Theorem~\ref{thm:multCSfin}. We claim that $\str{G}_N\longrightarrow (\str{B})^{\str{A}}_{r,1}$.
Consider an $r$-colouring of $\Emb(\str{A},\str{G}_N)$. Observe that for every $W\in \VSpace{\Alphabet}{N}{k}$ we get a unique copy of $\str{A}$
in $\str{G}_N$ given by $W(\varphi'(A))$. We thus obtain an $r$-colouring of $\VSpace{\Alphabet}{N}{k}$ and by an application of
Theorem~\ref{thm:multCSfin} a word $\widetilde{W}\in \VSpace{\Alphabet}{N}{n}$ for which this colouring is constant.
The monochromatic copy of $\str{B}$ is now given by $\widetilde{W}(\varphi(B))$: By Claim~\ref{clm2} we know that every copy of $\str A$ in $\str B$ is induced by a word from $\VSpace{\Alphabet}{N}{k}$, and so all of them indeed have the same colour.
\end{proof}
\subsection{Partial orders with linear extension}
\label{finitepartial}
Now we will consider structures in language with two binary relations $\leq$ and $\trianglelefteq$.
$\str{A}$ is a \emph{partial order with linear extension} if $(A,\trianglelefteq_\str{A})$ a partial order
and $(A,\leq_\str{A})$ is its linear extension.

We prove:
\begin{theorem}[\cite{Nevsetvril1984,Trotter1985}]
\label{thm:finposets}
For every integer $r>0$ and every pair of finite partial orders with linear extensions $\str{A}$ and $\str{B}$ there exists a finite partial order with linear extension
$\str{C}$ such that $\str{C}\longrightarrow (\str{B})^\str{A}_{r,1}$.
\end{theorem}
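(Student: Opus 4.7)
The approach will mirror the proof of Theorem~\ref{thm:fingraphs}, now working with the alphabet $\Alphabet = \{L, X, R\}$ from Section~\ref{sec:bigdegrees} and taking the Ramsey witness to be the finite substructure $\str{O}_N$ of $\str{O}$ whose vertex set is $\VSpace{\Alphabet}{N}{0} = \Alphabet^{\leq N}$, equipped with $\preceq$ as partial order and with the lexicographic order (relative to $L <_{\mathrm{lex}} X <_{\mathrm{lex}} R$) as linear extension; that this really is a linear extension of $\preceq$ is recorded in the remark following Proposition~\ref{prop:universalposet}. Given a finite partial order with linear extension $\str{B}$, assumed without loss of generality to satisfy $B = \{0, 1, \ldots, |B|-1\}$ with $\leq_\str{B}$ the integer order, the plan is to construct an embedding $\varphi \colon \str{B} \to \str{O}_n$ for suitable $n$, to represent all copies of $\str{A}$ inside $\varphi(\str{B})$ by $k$-parameter words, and finally to apply Theorem~\ref{thm:multCSfin}.

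The main piece of new bookkeeping is the notion of a Kat\v etov function for a partial order with linear extension: a map $f \colon B \to \Alphabet$ which, together with an insertion position $k \in \{0, \ldots, |B|\}$, describes a valid extension of $\str{B}$ by a new vertex $w$ — with $f(u) = L$ meaning $u \trianglelefteq w$ (forcing $u < k$), $f(u) = R$ meaning $w \trianglelefteq u$ (forcing $u \geq k$), $f(u) = X$ meaning $u, w$ incomparable, and the induced relation respecting the downward/upward closure imposed by $\trianglelefteq_\str{B}$. I would enumerate the Kat\v etov functions $f_0, \ldots, f_{d-1}$ in lexicographic order of their words $(f_i(0), \ldots, f_i(|B|-1))$, and then set $\varphi(v)$ to be the word of length $d + 2(v+1)$ whose first $d$ letters are $f_0(v), \ldots, f_{d-1}(v)$ and whose remaining $2(v+1)$ letters are the paired encoding of Proposition~\ref{prop:universalposet} (with self-marker $(L,R)$ at positions $d+2v, d+2v+1$ and the appropriate pair at $(d+2u, d+2u+1)$ for each $u < v$). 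Preservation of $\trianglelefteq_\str{B}$ is immediate from Lemma~\ref{lem:preserve1}; preservation of $\leq_\str{B}$ is forced by including, for each $v \in B$, the ``just below $v$'' Kat\v etov function $f^v$ defined by $f^v(x) = R$ if $x \trianglerighteq v$ and $X$ otherwise (inserted at position $v$), which guarantees that the lex-first Kat\v etov function distinguishing $u <_\str{B} v$ orders them in the correct direction.

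The core technical step, and the main obstacle, is the analogue of Claim~\ref{clm2}: for every $\widetilde{\str{A}} \in {\str{B} \choose \str{A}}$ one must produce a $k$-parameter word $W \in \VSpace{\Alphabet}{n}{k}$ satisfying $W(\varphi'(A)) = \varphi(\widetilde{A})$, where $\varphi' \colon \str{A} \to \str{O}_k$ is the corresponding embedding. The argument will go as in the graph case: every Kat\v etov function of $\widetilde{\str{A}}$ is the restriction of a Kat\v etov function of $\str{B}$ (extend by $X$ outside $\widetilde{A}$ and then propagate $L$'s and $R$'s along $\trianglelefteq_\str{B}$), which yields the index map $h$ used in the first $d$ coordinates of $W$; the remaining coordinates are built from the isomorphism $\theta \colon \str{A} \to \widetilde{\str{A}}$ (for indices in the image) together with the Kat\v etov functions describing the neighbours not in $\widetilde{A}$ (for indices outside it). Verifying that the parameters $\lambda_0, \ldots, \lambda_{k-1}$ occur in $W$ in the required first-occurrence-increasing order is the real work, because the three-valued alphabet and the paired encoding interact nontrivially with the lexicographic enumeration of Kat\v etov functions. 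Once this claim is in place, the proof concludes exactly as in Theorem~\ref{thm:fingraphs}: setting $N = N(3, n, k, r)$ from Theorem~\ref{thm:multCSfin} and $\str{C} = \str{O}_N$, any $r$-colouring $\chi$ of ${\str{O}_N \choose \str{A}}$ pulls back to a colouring of $\VSpace{\Alphabet}{N}{k}$ via $W \mapsto \chi(W(\varphi'(A)))$, and Theorem~\ref{thm:multCSfin} supplies $\widetilde{W} \in \VSpace{\Alphabet}{N}{n}$ on whose $k$-dimensional subspace this colouring is constant, so that $\widetilde{W}(\varphi(B))$ is a monochromatic copy of $\str{B}$ inside $\str{O}_N$, establishing $\str{O}_N \longrightarrow (\str{B})^{\str{A}}_{r, 1}$.
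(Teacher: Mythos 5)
There is a genuine gap, and it sits exactly at the step you yourself flag as ``the real work'': with your choice of prefix --- all three-valued Kat\v etov functions $f\colon B\to\{L,X,R\}$ enumerated lexicographically --- the analogue of Claim~\ref{clm2} is simply false, so the word $W$ you need is in general not a parameter word. The first occurrence of the parameter $\lambda_m$ (for $m<d'$) in $W$ is at the prefix position of the lexicographically least Kat\v etov function of $\str{B}$ whose restriction to $\widetilde{A}$ is $f'_m$, and this ``least extension'' operation is not monotone with respect to $<_{\mathrm{lex}}$ once the value $R$ is allowed. Concretely, let $B=\{0,1,2,3\}$ with $\trianglelefteq_\str{B}$ consisting of the single relation $1\trianglelefteq_\str{B}2$ (so the integer order is a linear extension), and let $\widetilde{A}=\{0,2,3\}$, an antichain. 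Both $f'=(X,X,R)$ and $g'=(X,R,X)$ (values listed on $0,2,3$) are Kat\v etov functions of $\widetilde{\str{A}}$, and $f'<_{\mathrm{lex}}g'$. The least extension of $g'$ assigns $L$ to the vertex $1$ (consistent, since $1\trianglelefteq_\str{B}2$), giving $(X,L,R,X)$, while the least extension of $f'$ cannot assign $L$ to $1$ (that would force $1\trianglelefteq_\str{B}3$, which fails), giving $(X,X,X,R)$. Since $(X,L,R,X)<_{\mathrm{lex}}(X,X,X,R)$, the parameter attached to the lex-larger function $g'$ first occurs in $W$ \emph{before} the parameter attached to $f'$, violating the defining property of parameter words; and the indices of these parameters are forced by the lexicographic enumeration used to build $\varphi'$, so this is not an artifact of bookkeeping.

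This is precisely the point the paper warns about: its proof of Claim~\ref{clm3} records in the prefix only the functions $f\colon B\to\{L,X\}$ representing \emph{downsets} of $(B,\trianglelefteq_\str{B})$ (and codes the linear extension by a plain suffix $R\cdots RL$ rather than your paired encoding). For downsets the least extension has the explicit form ``put $X$ only where forced by a witness $u\in\widetilde{A}$ with $f'(u)=X$ and $u\trianglelefteq_\str{B}v$, otherwise $L$,'' and monotonicity holds because the witness satisfies $u\leq_\str{B}v$ and so is read first in the lexicographic comparison; the paper explicitly remarks that using downsets rather than Kat\v etov functions is what makes this step go through. Two smaller points: preservation of $\trianglelefteq_\str{B}$ by your $\varphi$ is not a consequence of Lemma~\ref{lem:preserve1} (which concerns substitution into an infinite-parameter word) and must be checked directly as in Proposition~\ref{prop:universalposet}; and preservation of $\leq_\str{B}$ cannot be ``forced'' by inserting one designated function $f^v$, since the lexicographic comparison of $\varphi(u)$ and $\varphi(v)$ is decided at the first separating function in the enumeration, whichever that happens to be (one must argue that the lex-least separating function takes the smaller value on $u$).
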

\begin{remark}
The proof of Theorem~\ref{thm:finposets} presented here is related to proofs of this result based on the Graham--Rothschild theorem \cite[Theorem~4.1]{masulovic2016pre}.  We present it because our representation of the partial order by finite words is different. This difference is necessary to show Theorem~\ref{thm:posets} (where countably infinite partial orders need to be represented), but also perhaps makes the proof of Theorem~\ref{thm:finposets} a bit more systematic.
\end{remark}
\begin{proof}
We fix alphabet $\Alphabet=\{L,X,R\}$ and its ordering $L<_{\mathrm{lxr}} X<_{\mathrm{lxr}}R$.
Denote by $\str{O}_N$ the partial order induced on $\VSpace{\Alphabet}{N}{0}$ by $\str{O}$ (given by Definition~\ref{def:poset})
 with a linear extension defined by the lexicographic order.

Fix $\str{A}$ and $\str{B}$ and proceed in analogy to the proof of Theorem~\ref{thm:fingraphs}.  
 For simplicity, we can assume
that $B=\{0,1,\ldots,|B|-1\}$ and that $\leq_\str{B}$ coincides with the order of integers.  
We show that there exists $N$ such that
$\str{O}_N\longrightarrow (\str{B})^\str{A}_{r,1}$.

\medskip

We define an embedding $\varphi\colon \str{B}\to\str{O}_n$ for some
sufficiently large $n$ (to be fixed later) by the following procedure.  
We say
that function $f\colon B\to\{L,X\}$ \emph{represents a downset} of $\str{B}$ if the set $\{v\colon f(v)=L\}$ is downwards
closed with respect to $\trianglelefteq_\str{B}$.

Now enumerate all possible functions representing a downset as $f_0,\allowbreak f_1,\allowbreak \ldots,\allowbreak  f_{d-1}$ ordered
lexicographically with respect to $\leq_\str{B}$ and $<_{\mathrm{lxr}}$.  Put
$\varphi(v)=w$ where $w$ is a word of length $d+v+1$ defined as follows:
$$w_j=\begin{cases}f_j(v)&\hbox{ for $0\leq j<d$,}\\
R&\hbox{ for $d\leq j<d+v$,}\\
L&\hbox{ for $j=d+v$.}
\end{cases}$$
\begin{figure}
\centering
\includegraphics{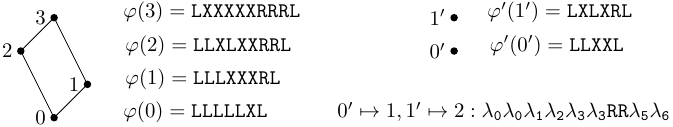}
\caption{Representation of a partial order $\str{B}$ with natural linear extension $0\leq_\str{B}1\leq_\str{B}2\leq_\str{B}3$
and a partial order $\str{A}$ with linear extension $0'\leq_\str{A} 1'$ (relations $\trianglelefteq_\str{B}$ and $\trianglelefteq_\str{A}$ are depicted
by Hasse diagrams) along with a parameter word representing the embedding of $\str{A}$ to $\str{B}$ as constructed in the proof of Claim~\ref{clm3}.}
\label{fig:posets}
\end{figure}
An example of this representation is depicted in Figure~\ref{fig:posets}.

Now put $n=d+|B|+1$. It is easy to see that $\varphi$ is an embedding of $\str{B}$ to $\str{O}_n$:
levels $d$ to $d+|B|$ code the linear extensions given by $\leq_\str{B}$
while earlier levels code all downsets.  Every pair of vertices $u\leq_\str{B} v$ which are not
comparable by $\trianglelefteq$ have downsets witnessing this which makes sure that their images are also not comparable
by $\preceq$.

\medskip

Let $\varphi'$ be an embedding of $\str{A} \to \str{O}_k$ for some $k>0$ constructed in the same way as $\varphi$. 

\begin{claim}
\label{clm3}
For every $\theta \in \Emb(\str{A},\str{B})$ there exists a $k$-parameter word
$W\in \VSpace{\Alphabet}{n}{k}$ such that $W(\varphi'(A))=\varphi(\theta[A])$.
\end{claim}
Put $\widetilde{\str{A}}=\theta(\str{A})$.
Let $f_0,f_1,\ldots, f_{d-1}$ be the enumeration of functions representing downsets of $\str{B}$ in the lexicographic
order (with respect to $\leq_\str{B}$ and $<_{\mathrm{lxr}}$) and $f'_0,f'_1,\ldots, f'_{d'-1}$ be the enumeration of functions representing downsets of $\widetilde{\str{A}}$ also ordered lexicographically.
Let $h\colon \{0,1,\ldots,d-1\}\to\{0,1,\ldots,d'-1\}$ be the mapping such that $f_i$ restricted to $\widetilde{A}$ is $f'_{h(i)}$.
Observe that every downset $f$ of
$\widetilde{\str{A}}$ can be extended to a downset of $f'$ and thus $h$ is well defined and surjective.

	We now define a string word $W$ (which we later verify to be a parameter word) of length $d+\max({\widetilde{A}})$ as follows:
$$
W_j=
\begin{cases}
 \lambda_{h(j)} & \hbox{for every $0\leq j< d$,}\\
 \lambda_{d'+\theta^{-1}(j-d)} & \hbox {for every $d\leq j< |B|+1$ such that $j-d\in \widetilde{A}$},\\
 R & \hbox{for every $d\leq j< |B|+1$ such that $j-d\notin \widetilde{A}$.}
\end{cases}
$$
Next, we verify that $W$ is a $k$-parameter word.  For this, we need to find
for every $f'_j$ its lexicographically minimal extension $f_{j'}$ and verify that the lexicographic order is preserved.
Given $f'_j$, we construct function $f\colon \str{B}\to\{L,X,R\}$ by putting:
$$
f(v)=
\begin{cases}
f'_j(v)& \hbox{if $v\in\widetilde{A}$,}\\
X & \hbox{if $v\notin\widetilde{A}$ and there exists $u\in\widetilde{A}$, $f'_j(u)=X$ and $u\trianglelefteq_\str{B} v$,}\\
L & \hbox{otherwise.}
\end{cases}
$$

Observe that there is $j'$ such that $f=f_{j'}$ and that $f_{j'}$ is lexicographically minimal among all functions $f_\ell$ which represent a downset of $\str B$ such that $h(f_\ell)=f'_j$. This is due to the fact that 
we put $f(v)=X$ only when this was forced by a ``witness'' $u\in\widetilde{A}$ for which $f'_j(u)=X$, and
thus the value of $v$ is $X$ in every extension of $f'_j$ which represents a downset.
To see that this construction preserves the lexicographic order, pick arbitrary $1\leq i < j \leq d'-1$ and let $f^i$ and $f^j$ be the extensions of $f'_i$ respectively $f'_j$ constructed as above. Let $v\in \widetilde{A}$ be minimal (with respect to $\leq_\str{B}$) such that $f'_i(v) \neq f'_j(v)$. We have that $f'_i(v) = L$ and $f'_j(v) = X$. Let $v'\in \widetilde{A}$ be minimal such that $f^i(v')\neq f^j(v')$. If $v'\in\widetilde{A}$ then $v=v'$ and indeed $f^i$ is lexicographically smaller than $f^j$. So $v'\notin\widetilde{A}$ and we know that exactly one of $f^i(v')$ and $f^j(v')$ is equal to $L$. If $f^i(v') = L$ then, again, $f^i$ is lexicographically smaller than $f^j$. So $f^i(v') = X$ and $f^j(v') = L$. This means that there is $u\in\widetilde{A}$ with $u\trianglelefteq_\str{B} v'$ such that $f'_i(u)=X$ and $f'_j(u)=L$. However, $u\trianglelefteq_\str{B} v'$, implies $u\leq_\str{B} v'$, hence $u \leq_\str{B} v$, which contradicts minimality of $v$. Hence this construction indeed preserves the lexicographic order. Note that at this moment we make use of the fact that our representation uses downsets rather than all 1-type
which would seem to be a more direct analogy of the proof of Theorem~\ref{thm:fingraphs}.

Therefore $W$ is indeed a parameter word, which finishes the proof of Claim~\ref{clm3}.

\medskip

Now let $N=N(0,k,n,r)$ be given by Theorem~\ref{thm:multCSfin}. We claim that $\str{O}_N\longrightarrow (\str{B})^{\str{A}}_{r,1}$.
Consider an $r$-colouring of $\str{O}_N$. Observe that for every $W\in \VSpace{\Alphabet}{N}{k}$ we get a unique copy of $\str{A}$
in $\str{O}_N$ given by $W(\varphi'(A))$. We thus obtain an $r$-colouring of $\VSpace{\Alphabet}{N}{k}$ and by application of
Theorem~\ref{thm:multCSfin} a word $\widetilde{W}\in \VSpace{\Alphabet}{N}{n}$ for which this colouring is constant.
The monochromatic copy of $\str{B}$ is now given by $\widetilde{W}(\varphi(B))$: By Claim~\ref{clm2} we know that every copy of $\str A$ in $\str B$ is induced by a word from $\VSpace{\Alphabet}{N}{k}$, and so all of them indeed have the same colour.
\end{proof}
\section{Applications}\label{sec:applications}
In this section, we briefly discuss some examples of structures where
finiteness of big Ramsey degrees follows as a direct consequence of
Theorems~\ref{thm:posets} and~\ref{thm:posets2}. This includes some already
known examples (linear orders, graphs, triangle-free graphs, ultrametric
spaces) as well as a new example ($S$-metric spaces). 

For each of the examples we will construct an interpretation in the universal homogeneous partial order $\str{P}$ (or
its fixed linear extension) which has the property that vertices of this
interpretation are formed by $\Emb(\str{V},\str{P})$ for some finite poset
$\str{V}$.  By obtaining a common representation of these structures within
partial orders we also show that free superpositions of such structures have
finite big Ramsey degrees, thereby giving a partial answer to a question asked
by Zucker during the 2018 BIRS workshop ``Unifying Themes in Ramsey Theory.''

We stress that the representations here generally only lead
to very generous upper bounds on big Ramsey degrees.

\subsection{Triangle-free graphs}
It may be a bit of a surprise that Theorem~\ref{thm:posets} implies
Theorem~\ref{thm:trianglefree} in a particularly easy way.  Given the universal homogeneous
partial order $\str{P}$, we denote by $\str{G}_\str{P}$ the following graph:
\begin{enumerate}
 \item Vertices of $\str{G}_\str{P}$ are all triples of distinct vertices $(u_0,u_1,u_2)$ of $\str{P}$ such that $u_0<_\str{P} u_2$, while $(u_0,u_1)$ and $(u_1,u_2)$ are incomparable in $\str{P}$.
 \item Vertices $(u_0,u_1,u_2)$ and $(v_0,v_1,v_2)$ form an edge of $\str G_\str P$ if and only if $u_0<_\str{P} v_1<_\str{P} u_2$, $v_0<_\str{P} u_1<_\str{P} v_2$
and all other pairs $(u_i,v_j)$, $i,j\in\{0,1,2\}$, are incomparable in $\str{P}$.
\end{enumerate}
By transitivity, $\str{G}_\str{P}$ is triangle-free: if both
$\{(u_0,u_1,u_2),(v_0,v_1,v_2)\}$ and $\{(v_0,v_1,v_2),\allowbreak
(w_0,w_1,w_2)\}$  are edges of $\str{G}_\str{P}$ then we have $u_0\leq_\str{P}
w_2$ which implies that $\{(u_0,u_1,\allowbreak u_2),\allowbreak (w_0,w_1,w_2)\}$ is a
non-edge.

It is not hard to check that there is an embedding $\varphi$ from the homogeneous universal triangle-free graph $\str{H}$ to $\str{G}_\str{P}$.
Recall that the vertex set of $\str{H}$ is $\omega$ and construct the embedding $\varphi$ inductively.
For each vertex $i\in \omega$ assume that $\varphi(i')$ is constructed for every $i'<i$ and apply the extension property of $\str{P}$ to obtain
three disjoint vertices $i_0,i_1,i_2\in P$, such $(i_0,i_1,i_2)$ is a vertex of $\str{G}_\str{P}$, and for every $j\leq i$ vertices $\varphi(j)=(j_0,j_1,j_2)$ are disjoint from $(i_0,i_1,i_2)$ and the following is satisfied:
\begin{enumerate}
\item If $i,j$ forms an edge of $\str{H}$ put $i_0\leq_\str{P} j_1\leq_\str{P} i_2$ and $j_0,\leq_\str{P} i_1,\leq_\str{P} j_2$ so $(i_0,i_1,i_2)$ and $(j_0,j_1,j_2)$ forms an edge of $\str{G}_\str{P}$.
\item If $i,j$ does not form an edge of $\str{H}$ put $i_0\leq_\str{P} j_2$ and $j_0\leq_\str{P} i_2$ while keeping all other pairs $(i_k,j_k')$, $k\in \{0,1,2\}$ incomparable in $\str{P}$.
\end{enumerate}

To finish the proof of Theorem~\ref{thm:trianglefree}, assume that we are given a finite colouring of $\Emb(\str{A},\str{H})$ for some finite triangle-free graph $\str A$. Since $\str H$ is universal, it contains a copy of $\str G_\str P$ and hence it induces a colouring of $\Emb (\str{A},\str G_\str P)$. This can be turned into a finite colouring of substructures of $\str P$ on at most $3\lvert A\rvert$ vertices and hence, by Theorem~\ref{thm:posets}, there is a copy of $\str P$ with at most a bounded number of colours. This corresponds to a copy of $\str G_\str P$ in $\str G_\str P$ with at most a bounded number of colours, and the rest follows since $\str H$ embeds into $\str G_\str P$.

\subsection{Urysohn $S$-metric spaces}
Let $S$ be a set of non-negative reals such that $0\in S$. A metric space $\str{M}=(M,d)$ is an \emph{$S$-metric space}
if for every $u,v\in M$ it holds that $d(u,v)\in S$. We call a countable $S$-metric
space $\str{M}$ a \emph{Urysohn $S$-metric space} if it is homogeneous (that is, every isometry of its finite subspaces extends to a bijective isometry from $\str{M}$ to $\str{M}$) 
and every countable $S$-metric space embeds to it.
(For a more general definition of the Urysohn space and Urysohn sphere, see e.g.~\cite{lopez2008oscillation})
In the following, we will see $S$-metric spaces as relational structures in a language with a binary relation $R_\ell$ for every $\ell\in S\setminus \{0\}$.

A finite set of non-negative reals $S=\{0=s_0<s_1<\cdots<s_n\}$ is \emph{tight} 
if $s_{i+j}\leq s_i+s_j$ for all $0\leq i\leq j\leq i+j\leq n$ (see~\cite{masulovic2016pre}).
It follows from a classification by Sauer~\cite{Sauer2013} that for every such $S$ there exists a Urysohn $S$-metric space.

Ma\v sulovi\'c in~\cite[Theorem 4.4]{masulovic2016pre} shows a way to represent $S$-metric spaces for every finite tight set $S$
by partial orders. Using this construction we obtain:
\begin{corollary}
\label{cor:urysohn}
Let $S$ be a finite tight set of non-negative reals. Then the Urysohn $S$-metric space has finite big Ramsey degrees.
\end{corollary}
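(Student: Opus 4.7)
The plan is to mirror the strategy used for triangle-free graphs earlier in this section: use Ma\v sulovi\'c's representation of $S$-metric spaces by partial orders to reduce the Ramsey question for the Urysohn $S$-metric space $\str{U}_S$ to one about the universal homogeneous partial order $\str{P}$, and then apply Theorem~\ref{thm:posets}. The role played by the triple $(u_0,u_1,u_2)$ in the triangle-free reduction will now be played by a fixed finite partial order $\str{V}_S$ encoding one point of the metric space.

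First I would invoke Theorem~4.4 of~\cite{masulovic2016pre}, which, using that $S$ is a finite tight set, provides a finite partial order $\str{V}_S$ together with an embedding of $\str{U}_S$ into $\str{P}$ under which each vertex of $\str{U}_S$ is represented by a copy of $\str{V}_S$ in $\str{P}$ and each distance $\ell\in S\setminus\{0\}$ between two vertices can be read off from the comparability pattern of the two corresponding copies of $\str{V}_S$ inside $\str{P}$. In particular, every $k$-element substructure $\str{A}$ of $\str{U}_S$ is encoded by a substructure $\widehat{\str{A}}$ of $\str{P}$ on at most $k\cdot|V_S|$ vertices, and the isometry type of $\str{A}$ is determined by the isomorphism type of $\widehat{\str{A}}$ as a partial order.

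Given a finite $r$-colouring $\chi$ of ${\str{U}_S\choose \str{A}}$, I would then pull it back to a colouring $\widehat{\chi}$ of the copies of $\widehat{\str{A}}$ in $\str{P}$ coming from the interpretation (extending arbitrarily, say to a single default colour, on any remaining copies of $\widehat{\str{A}}$). Applying Theorem~\ref{thm:posets} yields an embedding $f\colon \str{P}\to\str{P}$ on which the $\widehat{\chi}$-colour of each copy of $\widehat{\str{A}}$ depends only on its embedding type $\tau(f^{-1}[\widehat{A}])$, and Corollary~\ref{cor:types} bounds the number of such types. Since $f(\str{P})\cong\str{P}$, the Ma\v sulovi\'c interpretation produces an isomorphic copy of $\str{U}_S$ inside $f(\str{P})$, and the induced colouring of its ${\cdot\choose \str{A}}$ therefore takes at most this bounded number of values, proving finiteness of the big Ramsey degree of $\str{A}$ in $\str{U}_S$.

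The main obstacle I expect is the bookkeeping needed to verify that Ma\v sulovi\'c's representation transfers colourings cleanly: one must check that every copy of $\str{A}$ in the interpreted $\str{U}_S$ arises from a copy of a single fixed finite partial order $\widehat{\str{A}}$ (perhaps after summing over finitely many choices coming from internal symmetries of $\str{V}_S$ and from the finitely many possible comparability patterns realising a given distance), and that the resulting finite collection of patterns is preserved by the sub-copy of $\str{P}$ produced by Theorem~\ref{thm:posets}. The tightness hypothesis enters essentially at this step, since it is what guarantees that the comparability-based distance function satisfies the triangle inequality and therefore that $\str{V}_S$ and the ambient interpretation exist at all.
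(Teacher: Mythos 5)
Your proposal is correct and follows essentially the paper's own route: represent each point of the Urysohn $S$-metric space by a copy of a fixed finite poset in $\str{P}$ (the paper makes this explicit for $S=\{0,\ldots,d\}$ via chains of length $d$, citing Ma\v sulovi\'c's Theorem~4.4 for general tight $S$, exactly as you do), pull the colouring back to substructures of $\str{P}$ of bounded size, and apply Theorem~\ref{thm:posets} together with the bound on embedding types. The only point the paper makes slightly more explicit is that universality of the interpreted metric space (i.e.\ that $\str{U}_S$ embeds into it) is verified by an on-line extension argument inside $\str{P}$, a step you should not take entirely for granted from the finite-structure transfer result.
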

We will show a special case of Corollary~\ref{cor:urysohn} where $S=\{0,1,\ldots, d\}$. For other tight sets, we refer the reader to \cite[Theorem 4.4]{masulovic2016pre}.
\begin{proof}[sketch]
Fix $d$ and $S=\{0,1,\ldots, d\}$. Construct an $S$-metric space $\str{M}_S$ as follows:
\begin{enumerate}
 \item Vertices are chains of vertices of $\str{P}$ of length $d$.
 \item Given two chains $u_1<_\str{P} \cdots <_\str{P} u_{d}$ and $v_1<_\str{P} \cdots <_\str{P} v_{d}$ their distance is the minimal $\ell\in \{0,1,\ldots d\}$ such that
for every $i\in \{1,\ldots d-\ell\}$ it holds that $u_i<_\str{P} v_{i+\ell}$ and $v_i<_\str{P} u_{i+\ell}$.
\end{enumerate}
Just as in the case of triangle-free graphs, triangle inequality follows from transitivity and one can embed the Urysohn $S$-metric space to $\str{M}_S$ using an on-line algorithm, hence Corollary~\ref{cor:urysohn} follows.
\end{proof}
Note that not all finite sets $S$ for which there exists a Urysohn $S$-metric space
(these were characterised by Sauer~\cite{Sauer2013}) are tight and thus
Corollary~\ref{cor:urysohn} is not a complete characterisation.

Just like Theorem~\ref{thm:posets}, Corollary~\ref{cor:urysohn} has a known finite form.  The Ramsey property of the
class of all finite ordered metric spaces was shown by Ne\v set\v
ril~\cite{Nevsetvril2007} (see also~\cite{Dellamonica2012} for graph metric spaces). This result was later
generalised to all $S$-metric spaces~\cite{Hubicka2016, Konecny2018b}.

\subsubsection{Oscillation stability of the Urysohn sphere}
A structure is called \emph{(weakly) indivisible} if its big Ramsey degree of a vertex is equal to 1\footnote{Some authors consider indivisible structures to have the property that whenever their vertex set is partitioned into two parts, one of them is isomorphic to the original structure~\cite{cameron1997random} and ``weakly'' signifies the form as discussed here~\cite{Sauer2014}.}.
Work on the indivisibility of homogeneous $S$-metric spaces was originally
motivated by a connection to oscillation stability of the \emph{Urysohn sphere $\str{S}$} (up to isomorphism the unique complete separable ultrahomogeneous
metric space with diameter 1 into which every separable metric space with diameter
less or equal to 1 embeds isometrically).

In our setting, this can be formulated as a question about \emph{approximate indivisibility}: for every
finite colouring of vertices of $\str{S}$ and every $\epsilon>0$ there exists a colour $c$
and a copy $\str{S}'$ of $\str{S}$ in $\str{S}$ such that for every point $p\in S'$ there
exists $p'$ of colour $c$ in distance at most $\epsilon$.
This can be seen as a Urysohn-sphere analog of the distortion problem for $\ell_2$
which itself originated in the 1970s in the work of Milman~\cite{milman1971new,milman1992dvoretzky}.

Lopez-Abad and Nguyen van Th\'e showed that approximate indivisibility of
$\str{S}$ can be reduced to the question about indivisibility of homogeneous
$S$-metric spaces for every $S$ being a finite initial segment of
integers~\cite{lopez2008oscillation}.  The indivisibility was later proved
by Nguyen van Th\'e and Sauer~\cite{NVT2009b}, thus resolving positively the question of
approximate indivisibility of $\str{S}$. 

The indivisibility results were developed further.  Indivisibility of
$S$-metric spaces was later shown by Sauer~\cite{Sauer2012}. See
also~\cite{Delhomme2007, delhomme2008indivisible,The2010} for more background
on vertex partition theorems of Urysohn spaces.

Our proof of Corollary~\ref{cor:urysohn} can be easily refined to recover indivisibility
of $S$-metric spaces with $S$ being a finite initial segment of integers.  
(To do that, the chains used to represent vertices in the proof of Corollary~\ref{cor:urysohn} have to be
chosen to all have the same embedding type. One can choose, for example, the embedding type created by the construction in the proof of Proposition~\ref{prop:universalposet} for the natural enumeration of this chain.)
It is also possible to adjust ideas from Section~\ref{sec:posets} to show finiteness of big Ramsey
degrees of many additional homogeneous structures resembling metric spaces~\cite{balko2021big}.
Furthermore, now that we have a proof technique showing that big Ramsey degrees are bounded for colouring of arbitrary
finite substructures of $S$-metric spaces, this naturally leads to generalizations of the concept of approximate
indivisibility to \emph{metric big Ramsey degrees}. This is developed in a follow-up paper~\cite{Bice2023}.

\subsection{Ultrametric spaces}
Recall that metric space $\str{M}=(M,d)$ is an \emph{ultrametric space} if the
triangle inequality can be strengthened to $d(u,w)\leq \max\{d(u,v),d(v,w)\}$.
The \emph{Urysohn ultrametric space of diameter $d$} is the universal and homogeneous ultrametric
space with distances $\{0,1,\ldots, d\}$.
The following was shown by Nguyen Van Th{\'e}~\cite{NVT2009} (along with a full characterisation
of big Ramsey degrees of ultrametric spaces):

\begin{theorem}
\label{cor:corollaryu}
For every $d\geq 1$ the Urysohn ultrametric space of diameter $d$ has finite big Ramsey degrees.
\end{theorem}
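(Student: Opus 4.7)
The plan is to follow the template used earlier in Section~\ref{sec:applications} for triangle-free graphs and for Urysohn $S$-metric spaces: I represent $\str{U}_d$ inside a structure $\str{U}_\str{P}$ whose vertices are short tuples of $\str{P}$-vertices, and then reduce any finite colouring of copies of a finite substructure $\str{A}\subseteq\str{U}_d$ to a colouring of small substructures of $\str{P}$, where Theorem~\ref{thm:posets} can be applied.

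For the construction, let the vertex set of $\str{U}_\str{P}$ consist of all chains $\vec u=(u_1,\ldots,u_d)$ of $\str{P}$-vertices with $u_1<_\str{P}\cdots<_\str{P}u_d$. Declare the distance between $\vec u$ and $\vec v$ to be $0$ if $\vec u=\vec v$, and otherwise the largest index $i$ for which $u_i\neq v_i$. The strong triangle inequality is immediate: if $u_i=v_i$ for all $i>\ell_1$ and $v_i=w_i$ for all $i>\ell_2$, then $u_i=w_i$ for all $i>\max(\ell_1,\ell_2)$, so $d(\vec u,\vec w)\leq\max(d(\vec u,\vec v),d(\vec v,\vec w))$, and pairwise distances lie in $\{0,1,\ldots,d\}$. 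To see that $\str{U}_\str{P}$ is universal among countable ultrametric spaces of diameter $d$, view any such space $\str{M}$ as the set of leaves of its canonical ball tree $T$ of depth $d$ (with the nodes at depth $d-i$ being the closed balls of radius $i$). Embed $T$ as a partial order into $\str{P}$ so that the tree order coincides with $\leq_\str{P}$ on the image and no additional comparabilities arise; this is possible by the extension property of $\str{P}$, exactly as in the triangle-free construction presented earlier in this section. Reading off, for each leaf of $T$, the chain of its ancestors from depth $1$ up to depth $d$ then yields a copy of $\str{M}$ inside $\str{U}_\str{P}$, and in particular $\str{U}_d$ itself embeds into $\str{U}_\str{P}$.

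To deduce Theorem~\ref{cor:corollaryu}, fix a finite ultrametric substructure $\str{A}\subseteq\str{U}_d$ and a finite colouring $\chi$ of ${\str{U}_d\choose\str{A}}$. Choose an embedding $\str{U}_\str{P}\hookrightarrow\str{U}_d$ supplied by universality of $\str{U}_d$, and restrict $\chi$ to ${\str{U}_\str{P}\choose\str{A}}$. Any copy of $\str{A}$ in $\str{U}_\str{P}$ is determined by the union of the $d$-chains realising its vertices---a substructure of $\str{P}$ on at most $d|A|$ vertices---together with the assignment of chains to vertices of $\str{A}$, and for a fixed such $\str{P}$-substructure only boundedly many such assignments are possible. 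Iterating Theorem~\ref{thm:posets} over the finitely many isomorphism types of underlying $\str{P}$-substructures produces a copy of $\str{P}$ inside $\str{P}$ on which the induced colouring takes only boundedly many values. This yields a copy of $\str{U}_\str{P}$ inside $\str{U}_\str{P}$ on which $\chi$ attains boundedly many colours, and universality of $\str{U}_\str{P}$ locates inside it a copy of $\str{U}_d$. The main delicate point is the bookkeeping that matches the embedding type of a copy of $\str{A}$ in $\str{U}_\str{P}$ to the embedding type of its underlying chain substructure in $\str{P}$; this is handled exactly as in the triangle-free reduction earlier in this section.
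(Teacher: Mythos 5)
Your overall route is the same as the paper's: represent the diameter-$d$ Urysohn ultrametric space by short tuples of vertices of $\str{P}$ whose metric depends only on coordinatewise equality, verify universality, and push a finite colouring of copies of $\str{A}$ down to a colouring of substructures of $\str{P}$ on at most $d\lvert A\rvert$ vertices, to which Theorem~\ref{thm:posets} is applied finitely many times. The paper does exactly this (with arbitrary $d$-tuples of $\str{P}$-vertices rather than increasing chains, the distance being read off the longest common prefix, and with the same level of brevity about the final bookkeeping), so the reduction part of your argument is in order.

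There is, however, a concrete error in your universality step: the metric on $\str{U}_\str{P}$ and your reading of the ball tree are oriented oppositely. You define the distance between distinct chains as the \emph{largest} index at which they differ, so the coarse information must sit at the high indices; but you then assign to a leaf $x$ the chain of its ancestors from depth $1$ up to depth $d$, so that coordinate $d$ is the singleton ball $\{x\}$ itself. Hence the chains of any two distinct points of $\str{M}$ differ at index $d$, and the map you describe sends every pair of distinct points to distance $d$; it is not an isometric embedding, so universality of $\str{U}_\str{P}$ (and in particular the existence of a copy of the Urysohn space inside it) is not established as written. The slip is easily repaired: either let coordinate $i$ of the chain of $x$ be the closed ball of radius $i-1$ around $x$ (reading the ancestors from the leaf upwards, and embedding the tree with the root $\leq_\str{P}$-maximal so that these chains are increasing), or keep your reading order and define the distance via the \emph{first} index of disagreement (distance $=d+1$ minus the smallest index at which the chains differ), which is precisely the paper's convention. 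With either correction your proof coincides, in substance, with the one in the paper.
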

\begin{proof}
We construct an ultrametric space $\str{U}_d$ as follows:
\begin{enumerate}
 \item Vertices of $\str{U}_d$ are $d$-tuples of vertices of $\str{P}$.
 \item The distance between vertices $(u_0,u_1,\ldots u_{d-1})$ and $(v_0,v_1,\ldots, v_{d-1})$ is the minimal $\ell$ such that for every $0\leq i<d-\ell$ it holds that $u_i=v_i$.
\end{enumerate}
Again, it is easy to verify that this is a universal ultrametric space.  Finiteness of big Ramsey degrees now follows by an application of
Theorem~\ref{thm:posets}.
\end{proof}
Observe that one can replace $\str{P}$ by $\omega$ in the construction above and the same result (with better bounds)
follows by the infinite Ramsey theorem instead of Theorem~\ref{thm:posets}.
The construction above can be strengthened to all $\Lambda$-ultrametric spaces
where $\Lambda$ is a finite distributive lattice~\cite{sam}.

\subsection{Linear orders}
By fixing a linear extension of $\str{P}$ one obtains an alternative proof
of the Laver's result:
\begin{corollary}
The order of rationals has finite big Ramsey degrees.
\end{corollary}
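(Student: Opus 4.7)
The plan is to deduce finite big Ramsey degrees of $\mathbb{Q}$ from a mild strengthening of Theorem~\ref{thm:posets2}, by viewing $\mathbb{Q}$ as the linear-order reduct of the universal homogeneous partial order expanded by a suitable linear extension.

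First I would check that the substitution $U\mapsto W(U)$ preserves the lexicographic order $\leq_{\mathrm{lex}}$ on $\Sigma^*$, which by the remark after Proposition~\ref{prop:universalposet} is itself a linear extension of $\preceq$. Concretely: if $u<_{\mathrm{lex}} v$ first disagree at position $i$, then $W(u)$ and $W(v)$ agree at every coordinate of $W$ holding a constant letter or a parameter $\lambda_j$ with $j<i$, since $u_j=v_j$ there; at the earliest coordinate carrying $\lambda_i$ they disagree with $u_i<_{\mathrm{lex}} v_i$ being copied over, and the defining property of parameter words (that $\lambda_j$ first occurs before $\lambda_{j+1}$) excludes any earlier disagreement. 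Consequently, the embedding $f$ produced in the proof of Theorem~\ref{thm:posets2} is simultaneously an embedding of the enriched structure $(O,\preceq,\leq_{\mathrm{lex}})$ into itself, and the same proof yields finite big Ramsey degrees for this enriched structure.

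Next I would fix a linear extension $\leq^*$ of $\leq_\str{P}$ making $(P,\leq^*)$ dense without endpoints, for instance by taking $(P,\leq_\str{P},\leq^*)$ to be the \Fraisse{} limit of the class of finite partial orders with linear extensions, whose linear-order reduct is $\mathbb{Q}$ by Cantor's theorem. A mild variant of Proposition~\ref{prop:universalposet} that enumerates $P$ compatibly with $\leq^*$ identifies $(P,\leq_\str{P},\leq^*)$ with a substructure of $(O,\preceq,\leq_{\mathrm{lex}})$, so the previous step transfers finite big Ramsey degrees to $(P,\leq_\str{P},\leq^*)$. A standard expansion argument then finishes the proof: for a $k$-element linear order $\str{A}'$ the set $\binom{\mathbb{Q}}{\str{A}'}$ partitions into the finitely many classes $\binom{(P,\leq_\str{P},\leq^*)}{\str{A}_i}$ indexed by the $k$-vertex partial orders $\str{A}_i$ whose linear extension is $\str{A}'$, and iterating finite big Ramsey degrees of $(P,\leq_\str{P},\leq^*)$ once per $\str{A}_i$ produces a single self-embedding simultaneously bounding the number of colours on each class, and hence on $\binom{\mathbb{Q}}{\str{A}'}$.

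The main obstacle is the invariance check in the first paragraph: although short, it relies essentially on the parameter-word convention that the first occurrences of $\lambda_0,\lambda_1,\ldots$ appear in increasing order, without which substitution could introduce spurious earlier disagreements and destroy preservation of $\leq_{\mathrm{lex}}$. The remaining \Fraisse-theoretic identification and expansion-type iteration are routine.
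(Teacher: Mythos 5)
Your first paragraph is correct and is exactly the paper's key observation (the remark after Proposition~\ref{prop:universalposet}): substitution into an infinite-parameter word preserves $\leq_{\mathrm{lex}}$ as well as $\preceq$, so the embedding produced in Theorem~\ref{thm:posets2} respects the enriched structure $(O,\preceq,\leq_{\mathrm{lex}})$. The gap is in your second step. There is no ``mild variant of Proposition~\ref{prop:universalposet} that enumerates $P$ compatibly with $\leq^*$'': an enumeration has order type $\omega$, so it cannot be compatible with a dense order, and with the map $\varphi$ of Proposition~\ref{prop:universalposet} the linear order pulled back from $\leq_{\mathrm{lex}}$ is completely determined by $\leq_{\str{P}}$ and the enumeration (incomparable vertices with identical types over the earlier vertices are tie-broken by enumeration order), so you cannot simply prescribe it to be the generic dense extension $\leq^*$. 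What you actually need is that $(O,\preceq,\leq_{\mathrm{lex}})$ is universal for \emph{countable} partial orders with a linear extension, and that is a genuinely new claim requiring an on-line one-point-extension argument in which the new word must realise prescribed $\preceq$-relations \emph{and} land lex-strictly between two already fixed words; since $(\Sigma^*,\leq_{\mathrm{lex}})$ has covering pairs (nothing lies between $w$ and $wL$) and any word between two fixed words is forced to extend their common prefix, a careless construction can get stuck, so invariants must be maintained. In addition, your transfer ``$(P,\leq_{\str{P}},\leq^*)$ is a substructure of $(O,\preceq,\leq_{\mathrm{lex}})$, hence inherits finite big Ramsey degrees'' is not valid as stated: big Ramsey degrees do not pass to substructures; one needs bi-embeddability, i.e.\ also an embedding of $(O,\preceq,\leq_{\mathrm{lex}})$ into $(P,\leq_{\str{P}},\leq^*)$ (this direction does hold, by universality of the Fra\"{\i}ss\'e limit, but it must be invoked).

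The detour through the generic expanded poset is also unnecessary, and avoiding it recovers the paper's (implicit) argument. From your first paragraph, the linear order $(O,\leq_{\mathrm{lex}})$ already has finite big Ramsey degrees: every $k$-element subset of $O$ is a copy of the $k$-element chain in this reduct, Theorem~\ref{thm:posets2} bounds the number of colours of $k$-element subsets on $f(O)$, and $f$ preserves $\leq_{\mathrm{lex}}$, hence is a self-embedding of $(O,\leq_{\mathrm{lex}})$. Finally, $(O,\leq_{\mathrm{lex}})$ and the order of rationals are bi-embeddable: every countable linear order embeds into $\mathbb{Q}$, and $(O,\leq_{\mathrm{lex}})$ contains a dense suborder without endpoints (for instance the words over $\{L,R\}$ ending in $R$). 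Finite big Ramsey degrees transfer along bi-embeddability by the standard pull-back/push-forward of colourings, which finishes the proof without ever mentioning $(P,\leq_{\str{P}},\leq^*)$; your expansion-and-iteration paragraph is then not needed either.
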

While this may not be a very powerful observation on its own, we will discuss its
consequences in Corollary~\ref{cor:interposition}. Observe also that $\str{P}$
has a natural linear extension -- the lexicographic order.
\subsection{Structures with unary relations}
Another particularly simple consequence of Theorem~\ref{thm:posets} is:

\begin{corollary}
Let $L$ be a finite language consisting of unary relational symbols. Then the
universal homogeneous $L$-structure has finite big Ramsey degrees.
\end{corollary}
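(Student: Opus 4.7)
The plan is to interpret the universal homogeneous $L$-structure inside $\str{P}$, in the same spirit as the treatments of triangle-free graphs, Urysohn $S$-metric spaces, and ultrametric spaces in the previous subsections. Write $L = \{U_1, \ldots, U_k\}$ and let $\str{M}_L$ denote the universal homogeneous $L$-structure (simply a disjoint union of $2^k$ countably infinite ``sorts'', one for each subset of $L$). I would define a structure $\str{M}_\str{P}$ whose vertices are $(k+1)$-tuples $(v_0, v_1, \ldots, v_k)$ of distinct elements of $\str{P}$ such that $v_i$ and $v_j$ are incomparable for all $1 \leq i < j \leq k$ and, for each $i \in \{1, \ldots, k\}$, either $v_0 <_\str{P} v_i$ or $v_0$ is incomparable with $v_i$; the interpretation of $U_i$ is given by $(v_0, \ldots, v_k) \in U_i$ iff $v_0 <_\str{P} v_i$. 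By iterating the extension property of $\str{P}$ one vertex at a time, every vertex of $\str{M}_L$ can be assigned a $(k+1)$-tuple realising its type with all cross-tuple comparisons being incomparable in $\str{P}$; this yields an embedding $\varphi \colon \str{M}_L \to \str{M}_\str{P}$, so $\str{M}_\str{P}$ is universal.

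For the Ramsey reduction, fix a finite $L$-substructure $\str{A}$ on $n$ vertices. Via $\varphi$, every embedding of $\str{A}$ into $\str{M}_L$ corresponds to an embedding into $\str{P}$ of a single finite poset $\str{B}$ on $(k+1)n$ vertices (the one realised by $\varphi(\str{A})$ in generic position). A finite colouring $\chi$ of ${\str{M}_L \choose \str{A}}$ therefore pulls back, via $\varphi$, to a finite colouring of those copies of $\str{B}$ in $\str{P}$ which come from an embedding of $\str{A}$; extend this arbitrarily to all of ${\str{P} \choose \str{B}}$. Applying Theorem~\ref{thm:posets} I would obtain an embedding $f \colon \str{P} \to \str{P}$ on which the pulled-back colouring takes only boundedly many values. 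Re-running the greedy $\varphi$-construction inside $f(\str{P})$ produces a copy of $\str{M}_L$ inside $\str{M}_L$ on which $\chi$ uses the same bounded number of colours, which completes the proof.

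The hard part, such as it is, is purely bookkeeping: one must check that the $(k+1)$-tuple coding faithfully captures the type of each vertex, that generic placement of new tuples is always possible inside an arbitrary copy of $\str{P}$ (using the extension property), and that every copy of $\str{B}$ arising from $\varphi(\str{A})$ carries a single fixed $\str{P}$-isomorphism type, so that a single invocation of Theorem~\ref{thm:posets} suffices. Since $\str{M}_L$ has no relation of arity greater than one, $\str{B}$ is essentially a trivial ``columns'' poset and none of these checks is substantive; the resulting upper bound on the big Ramsey degree of $\str{A}$ is controlled by the number of embedding types of $\str{B}$, which is finite by Corollary~\ref{cor:types}.
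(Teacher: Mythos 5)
Your interpretation of the universal unary structure inside $\str{P}$ is in the same spirit as the paper's (which codes a vertex by a pair $(u_0,u_1)$ of distinct vertices of $\str{P}$, with $R$ holding iff $u_0\leq_\str{P}u_1$; your $(k+1)$-tuples are a harmless variant). The genuine gap is in the \emph{direction} in which you transport the colouring, and it surfaces at the last step. You pull $\chi$ back only to those copies of $\str{B}$ in $\str{P}$ that are unions of $\varphi$-images of vertices of $\str{M}_L$, and extend arbitrarily to the rest of ${\str{P}\choose\str{B}}$. Theorem~\ref{thm:posets} then gives a copy $f(\str{P})$ on which this \emph{extended} colouring is controlled. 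But when you re-run the greedy construction inside $f(\str{P})$, the new tuples lie in $f(\str{P})$ and have no reason to lie in the image of $\varphi$ (whose support is a thin set of generic tuples); on the corresponding copies of $\str{B}$ the controlled colouring is just the arbitrary extension, which carries no information about $\chi$. Moreover, the output of the re-run construction is a copy of $\str{M}_L$ inside your interpreted structure $\str{M}_\str{P}$, not inside $\str{M}_L$: the only way back to $\str{M}_L$ that you have is $\varphi^{-1}$, which is undefined on these new tuples, and composing with some other embedding $\str{M}_\str{P}\to\str{M}_L$ changes the $\chi$-colours, since $\chi$ is an arbitrary colouring and not isomorphism-invariant. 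So the final sentence of your argument does not follow from what precedes it.

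The repair is to transfer the colouring the other way, exactly as the paper does explicitly for triangle-free graphs in Section~\ref{sec:applications}. Take $\str{M}_\str{P}$ to have as vertices \emph{all} admissible tuples of $\str{P}$, not only generically placed ones. Since $\str{M}_\str{P}$ is a countable $L$-structure, universality of $\str{M}_L$ gives an embedding $\str{M}_\str{P}\to\str{M}_L$, and restricting $\chi$ along it colours \emph{every} copy of $\str{A}$ in $\str{M}_\str{P}$. This induces a finite colouring of substructures of $\str{P}$ on at most $(k+1)\lvert A\rvert$ vertices (a substructure may support several decompositions into admissible tuples, so colour it by the finite tuple of colours of all copies of $\str{A}$ it supports). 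Theorem~\ref{thm:posets} (in the form of Theorem~\ref{thm:posets2}) yields a copy $\str{P}'$ of $\str{P}$ on which this colouring depends only on the embedding type, hence takes boundedly many values; then $\str{M}_{\str{P}'}$ is a copy of $\str{M}_\str{P}$ all of whose copies of $\str{A}$ receive boundedly many $\chi$-colours, and since $\str{M}_{\str{P}'}$ is again universal it contains a copy of $\str{M}_L$, which finishes the proof. In this scheme your embedding $\varphi\colon\str{M}_L\to\str{M}_\str{P}$ is needed only to certify universality of $\str{M}_\str{P}$, not to transport the colouring.
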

\begin{proof}
For simplicity assume that $L$ consists of a single unary relation $R$.
Then the universal $L$-structure can be represented using $\str{P}$ as follows:
\begin{enumerate}
\item Vertices are all pairs of distinct vertices of $\str{P}$.
\item Put vertex $(u_0,u_1)$ to the relation $R$ if and only if $u_0\leq_\str{P} u_1$.
\end{enumerate}
\end{proof}

\subsection{Free superpositions}
\label{superpose}
Recall that the \emph{age} of a structure $\str{M}$ is the set of all finite structures
having an embedding to $\str{M}$. Given a language $L$ and its sub-language $L^-\subseteq L$, 
an $L^-$-structure $\str{M}$ is the \emph{$L^-$-reduct} of an $L$-structure $\str{N}$
if $M=N$ and $\rel{M}{}=\rel{N}{}$ for every $\rel{}{}\in L^-$.

Let $L$ and $L'$ be languages such that $L\cap L'=\emptyset$. Let $\str{M}$ be
a homogeneous $L$-structure and $\str{N}$ a homogeneous $L'$-structure.
Then the \emph{free superposition of $\str{M}$ and $\str{N}$}, denoted by $\str{M}\ast\str{N}$,
is the homogeneous $L\cup L'$-structure
whose age consists precisely of those finite $(L\cup L')$-structures
with the property that their $L$-reduct is in the age of $\str{M}$
and $L'$-reduct is in the age of $\str{N}$~(see e.g. \cite{Bodirsky2015}).

It follows from the product Ramsey argument that the free interposition of
finitely many Ramsey classes with strong amalgamation property and no
algebraicity is also Ramsey~\cite[Lemma 3.22]{Bodirsky2015}, see also
\cite[Proposition 4.45]{Hubicka2016}. Similar general result is not known for big Ramsey structures.
However, we can combine the above observations to the following corollary (of Theorem~\ref{thm:posets2}) which heads in this direction by providing
means to interpose many of the known structures with finite big Ramsey degrees:

\begin{corollary}
\label{cor:interposition}
Let $\str{M}$ be a homogeneous structure that is a free superposition of finitely many copies of structures from the following list (each in a language disjoint from the others):
\begin{enumerate}
 \item the homogeneous universal partial order,
 \item the homogeneous universal triangle-free graph,
 \item the Urysohn $S$-metric space for a finite tight set $S$ (for $S=\{0,1,2\}$ one obtains the Rado graph),
 \item the Urysohn ultrametric space of a finite diameter $d$,
 \item the order of rationals,
 \item the homogeneous universal structure in a finite unary relational language,
\end{enumerate}
then $\str{M}$ has finite big Ramsey degrees.
\end{corollary}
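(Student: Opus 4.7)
The plan is to represent $\str{M}$ as a substructure of a single common interpretation inside $\str{P}$ obtained by stacking together the tuple-based representations of the individual factors given in the preceding subsections, and then invoke Theorem~\ref{thm:posets2} directly. If the rationals appear as a factor, we work with a fixed linear extension of $\str{P}$ (for instance the lexicographic one mentioned after Proposition~\ref{prop:universalposet}); otherwise we work with $\str{P}$ alone.

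First, for each of the six listed classes, the corresponding subsection exhibits an interpretation of the homogeneous structure $\str{M}_i$ inside $\str{P}$ whose vertices are $d_i$-tuples of vertices of $\str{P}$ for some constant $d_i$ depending only on $\str{M}_i$, and whose relations are quantifier-free definable from $\leq_\str{P}$ applied to the coordinates of the tuples. For example, $d_i=3$ in the triangle-free case, $d_i=d$ in the ultrametric case of diameter $d$, $d_i=2$ for a single unary predicate, and $d_i=1$ for $\str{P}$ itself.

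Second, given the free superposition $\str{M}=\str{M}_1\ast\cdots\ast\str{M}_k$ in pairwise disjoint languages $L_1,\ldots,L_k$, I define an auxiliary $(L_1\cup\cdots\cup L_k)$-structure $\str{I}$ on $P^D$ with $D=d_1+\cdots+d_k$ as follows. Each vertex is split into blocks $(u^{(1)},\ldots,u^{(k)})$ with $u^{(i)}\in P^{d_i}$, and the $L_i$-relations are evaluated on the $i$-th blocks exactly as in the interpretation of $\str{M}_i$. Block disjointness implies that the $L_i$-reduct of $\str{I}$ is isomorphic to the interpretation of $\str{M}_i$, whose age equals the age of $\str{M}_i$; hence every finite substructure of $\str{I}$ lies in the age of $\str{M}$. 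Conversely, enumerating $\str{M}$ as $v_0,v_1,\ldots$ one builds an embedding $\str{M}\hookrightarrow\str{I}$ inductively: at step $n$, the type of $v_n$ over the previously embedded vertices splits by freeness of the superposition into $k$ independent $L_i$-types, each of which can be realized separately on the $i$-th coordinate block by the extension property of $\str{P}$.

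Third, a finite colouring of ${\str{M}\choose\str{A}}$ for a fixed finite $\str{A}\subseteq\str{M}$ with $a=|A|$ translates, through the embedding $\str{M}\hookrightarrow\str{I}$, into a colouring of those $aD$-element substructures of $\str{P}$ that are block-encodings of copies of $\str{A}$. Extending this colouring arbitrarily to all $aD$-element substructures of $\str{P}$ and applying Theorem~\ref{thm:posets2} yields an embedding $f\colon\str{P}\to\str{P}$ on whose image the colour depends only on the embedding type $\tau$; the number of such types is finite by Corollary~\ref{cor:types}. Applying $f$ coordinatewise to every block induces an embedding $\str{I}\to\str{I}$, which composed with $\str{M}\hookrightarrow\str{I}$ and with the embedding $\str{I}\hookrightarrow\str{M}$ provided by universality of the free superposition yields an embedding $\str{M}\to\str{M}$ witnessing the desired finite bound on the big Ramsey degree of $\str{A}$.

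The main obstacle is the second step, namely the construction of $\str{M}\hookrightarrow\str{I}$. It relies on the extension property of $\str{P}$ together with freeness of the superposition, which jointly permit independent block-wise types of a new vertex to be realized simultaneously. For a non-free superposition an additional amalgamation inside $\str{P}$ would be needed that need not be available, which is precisely why the corollary is restricted to free superpositions.
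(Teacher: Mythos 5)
Your overall skeleton is the same as the paper's: build one block/product structure over $\str{P}$ whose $i$-th coordinates carry the interpretation of $\str{M}_i$, embed $\str{M}$ into it ``transversally'' by an inductive use of the extension property and freeness, and finish with Theorem~\ref{thm:posets2} and Corollary~\ref{cor:types}. However, as written your definition of $\str{I}$ has a genuine defect: if the $L_i$-relations are evaluated on the $i$-th blocks ``exactly as in the interpretation of $\str{M}_i$'', then two distinct vertices of $\str{I}$ sharing the same $i$-th block break the axioms of the $i$-th factor --- for the partial-order factor you get $u\leq v$ and $v\leq u$ with $u\neq v$ (so the reduct is only a preorder), for the $S$-metric and ultrametric factors you get distance $0$ between distinct vertices, and for the rationals you get ties. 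Hence the $L_i$-reduct of $\str{I}$ is not isomorphic to the interpretation of $\str{M}_i$, the age of $\str{I}$ is not contained in the age of $\str{M}$, and the embedding $\str{I}\to\str{M}$ that you invoke ``by universality of the free superposition'' need not exist. The paper repairs exactly this point by modifying the relations on pairs with equal $i$-th projection (e.g.\ $u\leq v$ iff $u=v$, or $\pi_i(u)\neq\pi_i(v)$ and $\pi_i(u)\leq\pi_i(v)$; distance $\min(S\setminus\{0\})$, resp.\ $1$, when projections coincide) and by working with transversal substructures; your inductive embedding of $\str{M}$ must also be arranged to be transversal, which you never state.

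The second gap is the direction of the colouring transfer. You define the auxiliary colouring of block-encodings through the embedding $e\colon\str{M}\to\str{I}$ and extend it arbitrarily, but your final copy of $\str{M}$ is $g\circ F\circ e(\str{M})$, where $F$ is $f$ applied coordinatewise and $g\colon\str{I}\to\str{M}$ comes from universality. A copy of $\str{A}$ in that image has true colour $\chi\bigl(g(F(e(\str{A}')))\bigr)$, whereas Theorem~\ref{thm:posets2} only controls the auxiliary colour of the encoding of $F(e(\str{A}'))$; since $F(e(\str{M}))$ need not lie inside $e(\str{M})$, that auxiliary colour is one of the arbitrarily extended values and bears no relation to $\chi\circ g$, so the bounded-colour conclusion does not follow as written. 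The fix is to pull $\chi$ back through $g$ (the embedding of the product structure into $\str{M}$, which is what the paper's map $f\colon\str{N}\to\str{M}$ does), so that the colours controlled on the good copy are exactly the $\chi$-colours of copies of $\str{A}$ in $g(F(e(\str{M})))$. A further small point: one $aD$-element subset of $P$ can encode several distinct copies of $\str{A}$ (different groupings into blocks and orderings within blocks), so your colouring of substructures of $\str{P}$ is not well defined as stated; colour instead by the finite tuple of colours of all copies encoded, or argue coordinate by coordinate with repeated applications of Theorem~\ref{thm:posets2} as the paper does.
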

\begin{proof}
Let $\str{M}_1,\str{M}_2,\ldots, \str{M}_n$ be structures from the statement of
the corollary, in mutually
disjoint languages $L_1,L_2,\ldots,L_n$ such that for every $1\leq i\leq n$ it holds that $\str{M}_i$ is $L_i$-structure.
Put $\str{M}=\str{M}_1\ast\str{M}_2\ast\cdots\ast\str{M}_n$.

As we showed above, for each structure $\str{M}_i$, $1\leq i\leq n$, there exists a
structure $\str{N}_i$ and an embedding $e_i\colon\str{M}_i\to\str{N}_i$
such that $N_i=\Emb(\str{V}_i,\str{P})$ for some finite structure $\str{V}_i$
and $\str N_i$ is represented using the partial order $\str{P}$ (or its linear extension).

Now consider a $(L_1\cup L_2\cup\cdots\cup L_n)$-structure $\str{N}$ defined as follows.  The vertex set $N$ of
$\str{N}$ consists of all $n$-tuples $(\vec{v}_1,\ldots, \vec{v}_n)$ with the
property that for every $1\leq i\leq n$ it holds that $\vec{v}_i$ is a vertex
of $\str{N}_i$.  Denote by $\pi_i$ the $i$-th projection
$(\vec{v}_1,\ldots, \vec{v}_n)\mapsto \vec{v}_i$.

Now we define relations of $\str{N}$. For every $1\leq i\leq n$, consider structure $\str{N}_i$:
\begin{enumerate}
 \item If $\str{N}_i$ is a partial order, then the corresponding partial order of $\str{N}$ is created by putting $u\leq v$ if and only if either $u=v$ or $\pi_i(u)\neq\pi_i(v)$ and $\pi_i(u)\leq_{\str{N}_i} \pi_i(v)$.
 \item If $\str{N}_i$ is homogeneous universal triangle-free graph then we put $u$ and $v$ adjacent if and only if $\pi_i(u)$ is adjacent to $\pi_i(v)$ in $\str{N}_i$.
 \item If $\str{N}_i$ is the order of rationals, then the corresponding linear order of $\str{N}$ is any linear order satisfying that $\pi_i$ is a monotone function.
 \item If $\str{N}_i$ is an $S$-metric space, then the corresponding metric space on $\str{N}$ is created by defining a distance of $u$ and $v$ to be $0$ if $u=v$, $\min (S\setminus \{0\})$ if $\pi_i(u)=\pi_i(v)$ and the distance of $\pi_i(u)$ and $\pi_i(v)$ otherwise.
 \item If $\str{N}_i$ is an ultrametric space then the corresponding ultrametric space is created analogously, but by putting the distance to be $1$ for every $u\neq v$, $\pi_i(u)=\pi_i(v)$.
 \item If $\str{N}_i$ is a structure with unary relations, then for every relation $R\in L_i$ we put $v$ to $R_\str{N}$ if and only if $\pi_i(v)\in R$.
\end{enumerate}
We say that substructure $\str{A}$ of $\str{N}$ is \emph{transversal} if for every two distinct vertices $(\vec{u}_1,\allowbreak \vec{u}_2,\allowbreak \ldots,\allowbreak \vec{u}_n),(\vec{v}_1,\allowbreak \vec{v}_2,\allowbreak \ldots,\allowbreak \vec{v}_n)\in A$ and every $1\leq i\leq n$ it holds that $\vec{u}_i\neq \vec{v}_i$.
Observe that embeddings $e_i\colon\str{M}_i\to\str{N}_i$, $1\leq i\leq n$, can be combined to an embedding $e\colon\str{M}\to\str{N}$ defined by putting $e(v)\mapsto (e_1(v),\allowbreak e_2(v),\allowbreak \ldots,\allowbreak e_n(v))$, and that the image $e(\str{M})$ is transversal.
One can also verify that for every $1\leq i\leq n$ it holds that the age of $\str{N}_i$ is the same as the age of the $L_i$-reduct of $\str N$. It follows that $\str N$ and $\str M$ have the same ages.
By universality of $\str M$ it follows that there is also an embedding $f\colon\str{N}\to\str{M}$.

Fix a finite structure $\str{A}$ and a finite colouring $\chi$ of $\Emb(\str{A},\str{M})$. Denote by $\mathcal A$ the set of all transversal structures in $\Emb(\str{A},\str{N})$.
Consider a finite colouring $\chi'$ of $\mathcal A$ defined by $\chi'(\widetilde{\str{A}})=\chi(f(\widetilde{\str{A}}))$.
For every $1\leq i\leq n$ this colouring projects by $\pi_i$ to a finite colouring of finite substructures of $\str{N}_i$ and consequently also of $\str{O}$.
This follows from the fact that the vertex set of $\str{N}_j$ is $\Emb(\str{N}_j,\str{O})$, for every $1\leq j\leq n$ and thus preimages of vertices in projection $\pi_i$ are all finite and isomorphic.
By a repeated application of Theorem~\ref{thm:posets2} it follows that $\str{N}$ has
finite big Ramsey degrees.  By the existence of embedding $e$, the corollary follows.
\end{proof}
Corollary \ref{cor:interposition} has further consequences. Superposing the Rado graph (which is the Urysohn $S$-metric space for $S=\{0,1,2\}$)
and the universal homogeneous structure in the language with one unary relation one can obtain that the random countable bipartite graph has finite big Ramsey degrees.
This follows from the fact that the random countable bipartite graph can be defined in the superposition by considering only those edges
where precisely one of the endpoints is in the unary relation.

Similarly, by superposing the linear order with the universal homogeneous structure in the language with one unary relation it follows that the homogeneous dense local order
has finite big Ramsey degrees (this was shown by Laflamme, Nguyen Van Thé, and Sauer~\cite{laflamme2010partition}).
Superposing multiple linear and partial orders leads to big Ramsey equivalents of results of Soki\'c~\cite{sokic2013ramsey}, Solecki and Zhao~\cite{solecki2017ramsey},
and Dragani{\'c} and Ma{\v{s}}ulovi{\'c}~\cite{draganic2019ramsey}.

\section{Concluding remarks}\label{sec:concluding}
\subsection{Bigger forbidden substructures and bigger arities}
The method presented in this paper can be used to strengthen 
Theorem~\ref{thm:trianglefree} for free amalgamation classes in finite binary languages defined by finitely many forbidden 
irreducible substructures on at most $3$ vertices.

For non-binary relations and bigger forbidden irreducible substructures, it seems
necessary to refine Theorem~\ref{thm:multCS} for colouring multi-dimensional
objects rather than words in a similar manner as in~\cite{Hubickabigramsey,Hubicka2020uniform}
and is presently a work in progress~\cite{Balko2023Sucessor}.
This seems to further develop the link between constructions in the structural Ramsey theory
and the extension property for partial automorphisms~\cite{Hubicka2018EPPA}.
\subsection{Optimality}
\label{sec:optimality}
The big Ramsey degree of a vertex in the universal homogeneous tri\-an\-gle-free graph was
shown to be one by Komj\'ath and R\"odl~\cite{komjath1986} in 1986. The big Ramsey degree of an edge is four as shown by Sauer~\cite{sauer1998} in 1998.
Proofs of Theorems~\ref{thm:posets} and \ref{thm:trianglefree} can be refined to
exactly describe the big Ramsey degrees similarly as was done by 
Sauer~\cite{Sauer2006} for the random graph and Laflamme, Sauer, and Vuksanovic for free binary structures~\cite{Laflamme2006}. This leads
to big Ramsey structures as defined by Zucker~\cite{zucker2017}.  
Work on exact big Ramsey degrees of triangle-free graph based on the concepts of Section~\ref{sec:trianglefree}
eventually led to a more general result exactly characterising big Ramsey degrees of free amalgamation classes
in finite binary languages with finitely many forbidden substructures~\cite{Balko2021exact}.

Work on exact big Ramsey degrees of the universal homogeneous partial order led to interesting refinements
of the underlying Ramsey theorem and will appear in a follow-up paper~\cite{Balko2023}.

\subsection*{Acknowledgments}
I am grateful to the anonymous referee, Martin Balko, David Cho\-douns\-k\'y, Natasha Dobrinen, Mat\v ej Kone\v cn\'y, Jaroslav Ne\v set\v ril, Stevo Todorcevic, Lluis Vena and Andy Zucker for remarks and
corrections which improved presentation of this paper.  This work was directly
motivated by discussions with Jaroslav Ne\v set\v ril and also with Martin Balko, David Chodounsk\'y, Mat\v ej Kone\v
cn\'y, and Lluis Vena during the meetings of project 18--13685Y of the  Czech  Science Foundation (GA\v CR).  
I was introduced to this interesting area by
Claude Laflamme, Norbert Sauer, and Robert Woodrow during my post-doc stay in Calgary.

\bibliographystyle{plain}

\bibliography{ramsey.bib}
\end{document}